\theoremstyle{plain}
\newtheorem*{thma}{Theorem A}
\newtheorem*{thmb}{Theorem B}
\newtheorem*{thmc}{Theorem C}
\newtheorem*{corollarya}{Corollary}
\newtheorem{thm}{Theorem}
\newtheorem{lemma}[thm]{Lemma}
\newtheorem{corollary}[thm]{Corollary}
\newtheorem{proposition}[thm]{Proposition}
\theoremstyle{definition}
\newtheorem{definition}[thm]{Definition}
\newtheorem{remark}[thm]{Remark}
\theoremstyle{example}
\theoremstyle{plain}
\newcommand\appendix@section[1]{%
\refstepcounter{section}%
\orig@section*{Appendix \@Alph\c@section: #1}%
\addcontentsline{toc}{section}{Appendix \@Alph\c@section: #1}%
}
\let\orig@section\section
\g@addto@macro\appendix{\let\section\appendix@section}
\title{A Bott periodicity proof for real graded $C^\ast$-algebras}
\author{Sarah L. Browne}
\begin{document}
\maketitle

\begin{abstract}
We give a proof of Bott periodicity for real graded $C^\ast$-algebras in terms of $K$-theory and $E$-theory. Guentner and Higson~\cite{GH04} proved a similar result in the complex graded case but we extend this to cover all graded $C^\ast$-algebras. We obtain the $8$-fold periodicity in $E$-theory by constructing two maps that are inverse to each other. 
\end{abstract} 

\section*{Introduction}
$K$-theory has been studied since its description by Atiyah~\cite{Ati67} and Hirzebruch. Its generalisation from topological spaces to $C^\ast$-algebras has made it an essential part of understanding $C^\ast$-algebras. It became a key tool in the study of $C^\ast$-algebras due to the work by George Elliott on classification of $AF$-algebras~\cite{Ell76} and has been used ever since. Additionally there are uses in physics, geometry and algebraic topology as detailed by Connes in~\cite{Con94}.

$E$-theory was formed by Higson~\cite{Hig90} as a categorical object since $KK$-theory did not have the excision property given to $E$-theory. It was made into a concrete definition in terms of asymptotic morphisms by Connes and Higson~\cite{CH90} and further work has been done by Guentner, Higson and Trout~\cite{GHT00}, Dardarlat-Meyer~\cite{DM12} among others. 

In this paper we extend the proof in~\cite{GH04} of Bott periodicity of $E$-theory to real graded $C^\ast$-algebras. Bott periodicity is a key feature of the $K$-theory and $E$-theory construction and has many formulations and proofs. We give details of ours, in which we use Clifford algebras and operators. 

We construct the \emph{Bott map}, $\beta \colon \mathcal{S} \rightarrow C_0(V,\text{Cliff}(V))$ in the real case using functional calculus as in~\cite{GH04} and we prove our main result: 

\begin{thma}
For a finite dimensional Euclidean vector space $V$ over $\mathbb{R}$, the Bott map induces the isomorphism of $K$-theory 
\[\beta_{\ast} \colon K(\mathbb{R}) \rightarrow K(\mathbb{R} \widehat{\otimes} C_0(V,\text{Cliff}(V))).\]
\end{thma}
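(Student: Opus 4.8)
\emph{Proof strategy.} The plan is to prove that $\beta_\ast$ is an isomorphism by exhibiting $\beta$ as one of a pair of mutually inverse maps, following the Dirac-dual-Dirac scheme of Guentner--Higson~\cite{GH04} and of~\cite{GHT00} but carried out over $\mathbb{R}$. Write $\mathcal{A}(V) = C_0(V,\text{Cliff}(V))$. Alongside the Bott map $\beta\colon\mathcal{S}\to\mathcal{A}(V)$ I would construct an asymptotic morphism $\alpha$ --- a \emph{Dirac map} --- running from $\mathcal{A}(V)$ back to $\mathcal{S}$ modulo the usual stabilisation by compact operators, and prove the two homotopies $\alpha\circ\beta\simeq\mathrm{id}_{\mathcal{S}}$ and $\beta\circ\alpha\simeq\mathrm{id}_{\mathcal{A}(V)}$ at the level of asymptotic morphisms. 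Since $\alpha$ likewise induces a map $\alpha_\ast$ on $K$-theory, these homotopies give $\alpha_\ast\beta_\ast = \mathrm{id}$ and $\beta_\ast\alpha_\ast = \mathrm{id}$, which is the assertion of the theorem.

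For the construction of $\alpha$ I would use the real graded Hilbert space $H_V = L^2(V)\,\widehat{\otimes}\,\Lambda^\ast V$ and the Bott-Dirac operator $B_V = D_V + C_V$, where $D_V$ is the de Rham type Dirac operator on $V$ and $C_V$ is Clifford multiplication by the position vector: $\alpha$ is assembled from a functional calculus in $D_V$ perturbed by the canonical unbounded multiplier of $\mathcal{A}(V)$, so that on the relevant generators the operator that appears is $B_V$. The local-compactness and asymptotic-multiplicativity estimates needed to make $\alpha$ a well-defined asymptotic morphism go through as in the complex case~\cite{GH04,GHT00}, since $B_V^2$ is a harmonic oscillator with discrete spectrum whose kernel is one-dimensional, spanned by the Gaussian $e^{-|x|^2/2}$. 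Both $\beta$ and $\alpha$ are compatible with orthogonal direct-sum decompositions of $V$ through the (co)multiplicative structure of $\mathcal{S}$ --- the Bott map intertwines $\beta_{V\oplus W}$ with $(\beta_V\,\widehat{\otimes}\,\beta_W)\circ\Delta$ for the comultiplication $\Delta\colon\mathcal{S}\to\mathcal{S}\,\widehat{\otimes}\,\mathcal{S}$, and $\alpha$ in the analogous way via the multiplication $\mathcal{S}\,\widehat{\otimes}\,\mathcal{S}\to\mathcal{S}$ --- so it suffices to prove $\alpha_V\circ\beta_V\simeq\mathrm{id}$ for $V=\mathbb{R}$. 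There the composite is identified, up to homotopy, with the stabilisation $f\mapsto f\,\widehat{\otimes}\,p_0$ onto the one-dimensional ground-state projection $p_0$; this is the classical Mehler-type computation for the one-dimensional harmonic oscillator, and it manifestly induces an isomorphism on $K$-theory.

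The reverse relation $\beta_V\circ\alpha_V\simeq\mathrm{id}$ is the step I expect to be the main obstacle. I would prove it by the rotation homotopy in $V\oplus V$: under $\mathcal{A}(V\oplus V)\cong\mathcal{A}(V)\,\widehat{\otimes}\,\mathcal{A}(V)$, the family of rotations interchanging the two summands gives a homotopy between the asymptotic morphisms obtained by performing $\beta_V$ (respectively $\alpha_V$) in the first versus the second tensor factor, and combining this with the already-established $\alpha_V\circ\beta_V\simeq\mathrm{id}$ forces $\beta_V\circ\alpha_V\simeq\mathrm{id}$. The other, pervasive difficulty --- the point that genuinely separates this argument from the complex one of~\cite{GH04} --- is keeping track of the \emph{real} graded structure everywhere: $\text{Cliff}(V)$ is a real Clifford algebra, so the base case $V=\mathbb{R}$, the ground-state projection $p_0$, and the rotation homotopy must all be arranged compatibly with the relevant real structure and $\mathbb{Z}/2$-grading, and this is precisely where Bott's $8$-fold rather than $2$-fold periodicity becomes visible; moreover each functional-calculus formula in the argument produces a priori a complex-linear operator, and one must check it commutes with the conjugation on $H_V$ and so is defined over $\mathbb{R}$. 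Granting these verifications, $\beta$ is a stable homotopy equivalence and $\beta_\ast$ the claimed isomorphism.
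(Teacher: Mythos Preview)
Your proposal is correct and follows the same overall Dirac/dual-Dirac scheme as the paper: construct the inverse asymptotic morphism $\alpha$ from the Dirac operator, identify the composite $\alpha\circ\beta$ with functional calculus in the Bott--Dirac (harmonic oscillator) operator $B=C+D$ via a Mehler-type computation and hence with the rank-one projection onto the Gaussian ground state, and then obtain $\beta\circ\alpha\simeq\mathrm{id}$ from an Atiyah rotation argument in $V\oplus V$.

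The one substantive difference is your reduction step: you propose to exploit the multiplicativity of $\beta$ and $\alpha$ under orthogonal direct sums $V=V_1\oplus\cdots\oplus V_n$ to reduce the verification of $\alpha_V\circ\beta_V\simeq\mathrm{id}$ to the one-dimensional case $V=\mathbb{R}$. The paper does not do this; it works with an arbitrary finite-dimensional $V$ throughout, proving Mehler's formula (Proposition~\ref{Mehler}) and the spectral analysis of $B$ (Proposition~\ref{eigenvalues}, Corollary~\ref{rank1}) directly in dimension $n$, and then carrying out the asymptotic comparison $e^{-t^{-2}B^2}\sim e^{-t^{-2}C^2}e^{-t^{-2}D^2}$ in that generality. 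Your reduction is a legitimate alternative and slightly shortens the analytic work, at the cost of having to check the compatibility of $\alpha$ with direct sums (which is not entirely formal, since $\alpha$ is only an asymptotic morphism). The paper's rotation step is also organised a little differently from yours: rather than literally rotating the two copies of $V$, it proves an explicit asymptotically commuting square (Lemma~\ref{asymptoticallycommutes}) and invokes the fact that the flip on $\mathcal{C}(V)\widehat{\otimes}\mathcal{C}(V)$ is homotopic to $h_1\widehat{\otimes}h_2\mapsto h_1\widehat{\otimes}\iota h_2$ (Lemma~\ref{flipmaphomotopic}), but this is the same idea unpacked.
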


In order to prove this, we construct an inverse map $\alpha_t$ to $\beta$, $$\alpha_t \colon \mathcal{S} \widehat{\otimes} C_0(V, \text{Cliff}(V)) \rightarrow \mathcal{K}(\mathcal{H}(V))$$ and we will show that the induced composition $\alpha_{\ast}\beta_{\ast}$ is equivalent to a simpler map $\gamma_t \colon \mathcal{S} \rightarrow \mathcal{K}(\mathcal{H}(V))$. The map $\gamma_t$ will give a $K$-theory equivalence.
Additionally we use the Atiyah rotation trick~\cite{Ati68} to prove $ \beta_{\ast}\alpha_{\ast}$ yields the identity in $K$-theory. 
We will obtain the following Corollary and Theorem by showing that when $V=\mathbb{R}$ then $C_0(V,\text{Cliff}(V)) = \Sigma \mathbb{R} \widehat{\otimes} \mathbb{R}_{1,0}:$

\begin{corollarya}
We have an isomorphism
\[K(\mathbb{R}) \cong K(\Sigma \mathbb{R} \widehat{\otimes} \mathbb{R}_{1,0}).\]
\end{corollarya}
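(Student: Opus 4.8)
The plan is to obtain the Corollary as an immediate consequence of Theorem~A, by specialising to $V = \mathbb{R}$ and unwinding the definitions on the target side; essentially no new input beyond Theorem~A is needed, and the work is purely notational.

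First I would apply Theorem~A with $V = \mathbb{R}$ carrying its standard Euclidean structure. This produces an isomorphism $\beta_\ast \colon K(\mathbb{R}) \to K(\mathbb{R} \widehat{\otimes} C_0(\mathbb{R}, \text{Cliff}(\mathbb{R})))$. Next I would simplify the right-hand algebra. Since the trivially graded field $\mathbb{R}$ is the unit for the graded tensor product, $\mathbb{R} \widehat{\otimes} B \cong B$ canonically for every graded real $C^\ast$-algebra $B$; and since $\text{Cliff}(\mathbb{R})$ is a fixed finite-dimensional graded $C^\ast$-algebra, by definition $C_0(\mathbb{R}, \text{Cliff}(\mathbb{R})) = C_0(\mathbb{R}) \widehat{\otimes} \text{Cliff}(\mathbb{R})$ with $C_0(\mathbb{R})$ trivially graded. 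Writing $\Sigma\mathbb{R} = C_0(\mathbb{R})$ for the suspension of $\mathbb{R}$ and identifying $\text{Cliff}(\mathbb{R})$ with $\mathbb{R}_{1,0}$ — the real Clifford algebra on a single odd generator, in the grading conventions fixed earlier in the paper — this reads $\mathbb{R} \widehat{\otimes} C_0(\mathbb{R}, \text{Cliff}(\mathbb{R})) \cong \Sigma\mathbb{R} \widehat{\otimes} \mathbb{R}_{1,0}$ as graded $C^\ast$-algebras.

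Composing the isomorphism of Theorem~A with this identification of its target (and using that $K$-theory is invariant under graded $\ast$-isomorphism) gives $K(\mathbb{R}) \cong K(\Sigma\mathbb{R} \widehat{\otimes} \mathbb{R}_{1,0})$, as claimed. The only thing to verify with any care is the bookkeeping of gradings: one must confirm that the grading on $C_0(\mathbb{R}, \text{Cliff}(\mathbb{R}))$ appearing in Theorem~A is indeed the one with trivial grading on $C_0(\mathbb{R})$ and all odd part coming from $\text{Cliff}(\mathbb{R})$, and that $\text{Cliff}(\mathbb{R})$ so graded is the algebra the paper calls $\mathbb{R}_{1,0}$; both are immediate from the definitions. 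I expect no genuine obstacle here — the mathematical content sits entirely inside Theorem~A, and this statement is just a reformulation in the special case $V = \mathbb{R}$.
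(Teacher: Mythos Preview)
Your proposal is correct and matches the paper's own argument essentially verbatim: the paper simply observes that for $V=\mathbb{R}$ one has $C_0(V,\text{Cliff}(V)) \cong C_0(\mathbb{R}) \widehat{\otimes} \text{Cliff}(\mathbb{R}) \cong \Sigma\mathbb{R} \widehat{\otimes} \mathbb{R}_{1,0}$ and then invokes Theorem~A. There is nothing more to add.
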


\begin{thmb}
For a real graded $C^\ast$-algebra $A$, we have a natural isomorphism
\[K(A) \cong K(\Sigma A \widehat{\otimes} \mathbb{R}_{1,0}).\]
\end{thmb}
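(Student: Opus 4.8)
The plan is to deduce Theorem~B from Theorem~A (specialised to $V=\mathbb{R}$) by letting $A$ ride along as a passive tensor factor. First, since $\Sigma\mathbb{R}=C_0(\mathbb{R})$ and $\mathbb{R}_{1,0}=\text{Cliff}(\mathbb{R})$, associativity and commutativity of the graded tensor product give
\[
\Sigma A\widehat{\otimes}\mathbb{R}_{1,0}\;\cong\;A\widehat{\otimes}\bigl(\Sigma\mathbb{R}\widehat{\otimes}\mathbb{R}_{1,0}\bigr)\;=\;A\widehat{\otimes}C_0(\mathbb{R},\text{Cliff}(\mathbb{R})),
\]
which is exactly the identification used to pass from Theorem~A to the Corollary. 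So it suffices to produce a natural isomorphism $K(A)\cong K\bigl(A\widehat{\otimes}C_0(\mathbb{R},\text{Cliff}(\mathbb{R}))\bigr)$.

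Recall that $K(B)=E(\mathbb{R},B)$ is computed as homotopy classes of asymptotic morphisms $\mathcal{S}\to B\widehat{\otimes}\mathcal{K}$, and that the map $\beta_\ast$ of Theorem~A sends the class of $\varphi\colon\mathcal{S}\to B\widehat{\otimes}\mathcal{K}$ to the class of $(\beta\widehat{\otimes}\varphi)\circ\Delta$, where $\beta\colon\mathcal{S}\to C_0(\mathbb{R},\text{Cliff}(\mathbb{R}))$ is the (honest graded $\ast$-homomorphism) Bott map and $\Delta\colon\mathcal{S}\to\mathcal{S}\widehat{\otimes}\mathcal{S}$ is the comultiplication. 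For a general real graded $C^\ast$-algebra $A$ I would define
\[
\beta_A\colon K(A)\longrightarrow K\bigl(A\widehat{\otimes}C_0(\mathbb{R},\text{Cliff}(\mathbb{R}))\bigr),\qquad [\varphi]\longmapsto\bigl[(\beta\widehat{\otimes}\varphi)\circ\Delta\bigr],
\]
by the same recipe, reassociating and flipping so that $A$ occupies the correct tensor slot; and I would define $\alpha_A$ and $\gamma_A$ from the asymptotic morphisms $\alpha_t$ and $\gamma_t$ in the same way, with the $A$-factor left untouched.

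The core of the argument is that the two facts established for $V=\mathbb{R}$ while proving Theorem~A — that $\alpha_\ast\beta_\ast$ coincides with the $K$-theory equivalence induced by $\gamma_t$, and that $\beta_\ast\alpha_\ast=\mathrm{id}$ via the Atiyah rotation trick — survive applying $\mathrm{id}_A\widehat{\otimes}(-)$. Concretely, one checks that $\mathrm{id}_A\widehat{\otimes}(-)$ sends asymptotic morphisms to asymptotic morphisms, preserves homotopy, and is compatible with the composition product, so that each homotopy used in the $V=\mathbb{R}$ proof (the functional-calculus homotopies, the homotopies in the $\gamma_t$ step, and the sequence of rotations in the Atiyah trick — all supported on the $\mathcal{S}$-, $C_0(\mathbb{R},\text{Cliff}(\mathbb{R}))$- and $\mathcal{H}(\mathbb{R})$-factors) tensors up verbatim. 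Equivalently and more cleanly: Theorem~A together with these two facts upgrades to the statement that the Bott class $[\beta]\in E\bigl(\mathbb{R},C_0(\mathbb{R},\text{Cliff}(\mathbb{R}))\bigr)$ is invertible in $E$-theory with inverse $[\alpha]$; then, by functoriality of the graded tensor product on $E$-theory, $[\beta]\widehat{\otimes}\mathrm{id}_A$ is an invertible element of $E\bigl(A,A\widehat{\otimes}C_0(\mathbb{R},\text{Cliff}(\mathbb{R}))\bigr)$, and composing with it yields the isomorphism
\[
K(A)=E(\mathbb{R},A)\xrightarrow{\ \cong\ }E\bigl(\mathbb{R},A\widehat{\otimes}C_0(\mathbb{R},\text{Cliff}(\mathbb{R}))\bigr)=K\bigl(\Sigma A\widehat{\otimes}\mathbb{R}_{1,0}\bigr).
\]

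Finally, naturality in $A$ is immediate: for a graded $\ast$-homomorphism $\psi\colon A\to B$ the square comparing $\beta_A$ and $\beta_B$ commutes because $\beta$, $\Delta$ and the flip do not involve $A$ or $B$ while $\psi\widehat{\otimes}\mathrm{id}$ is functorial, so $\{\beta_A\}_A$ is a natural transformation $K(-)\Rightarrow K\bigl(\Sigma(-)\widehat{\otimes}\mathbb{R}_{1,0}\bigr)$ that is an isomorphism at every $A$. I expect the main obstacle to be essentially bookkeeping: verifying that every step of the $V=\mathbb{R}$ argument is ``tensor-linear'' in the spectator variable $A$ — equivalently, that the $E$-theoretic invertibility of $[\beta]$, and not merely the $K$-theoretic statement of Theorem~A, has been secured — rather than any new geometric input.
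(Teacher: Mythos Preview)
Your proposal is correct and follows essentially the same approach as the paper: the paper's entire proof is the one-line remark that the result ``follows from the $K$-theory product described in section~1.7 of~\cite{GH04}'', i.e.\ exactly your strategy of tensoring the invertible Bott class by $\mathrm{id}_A$ and using the product to pass from $K(\mathbb{R})\cong K(C_0(\mathbb{R},\text{Cliff}(\mathbb{R})))$ to $K(A)\cong K(A\widehat{\otimes} C_0(\mathbb{R},\text{Cliff}(\mathbb{R})))$. Your $E$-theoretic reformulation is likewise the argument the paper later uses for Theorem~C.
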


Finally we will transfer this into $E$-theory by using the relation between $K$- and $E$-theory, namely $$K(A) := E(\mathbb{R},A),$$
and obtain
\begin{thmc}
For real graded $C^\ast$-algebras $A$ and $B$, 
\[E(A,B) \cong E( A , \Sigma B \widehat{\otimes} \mathbb{R}_{1,0}).\]
\end{thmc}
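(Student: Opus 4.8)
The plan is to deduce Theorem C from Theorem B by exploiting the definition $K(A) := E(\mathbb{R}, A)$ together with the standard descent/stability properties of $E$-theory. The essential observation is that Theorem B, although stated for $K$-theory, is proved by exhibiting a specific $E$-theory element (the class of the Bott map and its inverse) that implements the isomorphism; since $E$-theory is a category in which composition is available, tensoring this invertible element with the identity morphism on an arbitrary algebra should upgrade the $K$-theory statement to the bivariant statement.

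**The key steps, in order.** First I would recall that $E(\cdot,\cdot)$ is a bifunctor on real graded $C^\ast$-algebras which is stable, half-exact, and admits a composition product, and that there is an exterior (tensor) product $E(A_1,B_1) \times E(A_2,B_2) \to E(A_1 \widehat{\otimes} A_2, B_1 \widehat{\otimes} B_2)$ compatible with composition. Second, I would extract from the proof of Theorem B the canonical class: the Bott map $\beta$ and the map $\alpha_t$ together define mutually inverse elements $[\beta] \in E(\mathbb{R}, \Sigma\mathbb{R}\widehat{\otimes}\mathbb{R}_{1,0})$ and $[\alpha] \in E(\Sigma\mathbb{R}\widehat{\otimes}\mathbb{R}_{1,0}, \mathbb{R})$ with $[\alpha]\circ[\beta] = \mathrm{id}_{\mathbb{R}}$ and $[\beta]\circ[\alpha] = \mathrm{id}$ in $E$-theory — this is precisely what Theorem A/Corollary/Theorem B establish once one unwinds that the $K$-theory equivalence is witnessed by an honest (asymptotic) morphism. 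Third, for arbitrary real graded $C^\ast$-algebras $A$ and $B$, I would form the exterior products with $\mathrm{id}_B$:
\[
[\beta]\widehat{\otimes}\mathrm{id}_B \in E(B, \Sigma B \widehat{\otimes}\mathbb{R}_{1,0}), \qquad [\alpha]\widehat{\otimes}\mathrm{id}_B \in E(\Sigma B\widehat{\otimes}\mathbb{R}_{1,0}, B),
\]
using the identifications $\mathbb{R}\widehat{\otimes}B \cong B$ and $(\Sigma\mathbb{R}\widehat{\otimes}\mathbb{R}_{1,0})\widehat{\otimes}B \cong \Sigma B\widehat{\otimes}\mathbb{R}_{1,0}$ (up to the graded flip, which is trivial in $E$-theory by stability). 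Fourth, I would check that these remain mutually inverse: $([\alpha]\widehat{\otimes}\mathrm{id}_B)\circ([\beta]\widehat{\otimes}\mathrm{id}_B) = ([\alpha]\circ[\beta])\widehat{\otimes}\mathrm{id}_B = \mathrm{id}_{\mathbb{R}}\widehat{\otimes}\mathrm{id}_B = \mathrm{id}_B$, and symmetrically, by the interchange law between the composition and exterior products. Finally, composition with $[\beta]\widehat{\otimes}\mathrm{id}_B$ on the second variable gives, for every $A$, a bijection $E(A,B) \xrightarrow{\ \cong\ } E(A,\Sigma B\widehat{\otimes}\mathbb{R}_{1,0})$, which is the claimed natural isomorphism; naturality in both $A$ and $B$ follows from associativity and functoriality of the two products.

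**The main obstacle** I anticipate is entirely bookkeeping rather than conceptual: one must verify that the equivalences of Theorem B are realized at the level of $E$-theory elements (not merely abstract isomorphisms of $K$-groups) so that the exterior product with $\mathrm{id}_B$ makes sense, and one must be careful about the graded tensor flip $\mathbb{R}_{1,0}\widehat{\otimes}B \cong B\widehat{\otimes}\mathbb{R}_{1,0}$ and the reordering $\mathcal{S}\widehat{\otimes}B \cong \Sigma B$ (recall $\mathcal{S} = C_0(\mathbb{R})$ so $\mathcal{S}\widehat{\otimes}B = \Sigma B$), since in the real graded setting signs and orientations genuinely matter. However, all of these reorderings induce the identity in $E$-theory — flips of tensor factors are homotopic to the identity through the standard rotation in $M_2$, exactly the Atiyah rotation trick already invoked for Theorem A — so once the invertible $E$-theory class is in hand, the passage to arbitrary coefficients $A$ and $B$ is formal. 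I would therefore devote the bulk of the write-up to stating cleanly the properties of the exterior product in real graded $E$-theory (citing~\cite{GHT00} and~\cite{DM12}) and to the identification of the distinguished class, and keep the final deduction short.
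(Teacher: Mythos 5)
Your proposal is correct and follows essentially the same route as the paper: both extract from Theorem~A/B the invertible class $[\beta] \in E(\mathbb{R}, \Sigma\mathbb{R}\widehat{\otimes}\mathbb{R}_{1,0})$, form the exterior product $[\beta]\widehat{\otimes}\mathrm{id}_B$ to obtain an invertible class $[\beta_B] \in E(B, \Sigma B\widehat{\otimes}\mathbb{R}_{1,0})$, and then compose against it in the second variable via the $E$-theory product to get the isomorphism $E(A,B) \cong E(A,\Sigma B\widehat{\otimes}\mathbb{R}_{1,0})$. You are somewhat more explicit than the paper about verifying invertibility via the interchange law and about the tensor-flip bookkeeping, but the underlying argument is the same.
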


This result will be obtained by the tensor product bifunctor property of $E$-theory and the facts that both $\beta$ and $\alpha_t$ give classes in $E$-theory which are invertible. Then we use properties of Clifford algebras to formulate the $8$-fold periodicity in $E$-theory. 

\section*{Acknowledgements}
The author would like to thank her PhD supervisor, Paul Mitchener, for the support and conversations that allowed this paper to be written. 
Additionally, the author thanks the EPSRC for her funding for her PhD. 
Then the author also thanks the following for conversations, email correspondence etc: Erik Guentner, Nigel Higson, Jamie Gabe, Ulrich Pennig and many others. 

\section{$K$- and $E$-theory}
In this section we recall the definitions of $K$- and $E$-theory alongside the fundamental definitions of aspects of real graded $C^\ast$-algebras. 
We begin by recalling the definitions of grading on a $C^\ast$-algebra. Let $A$ be a graded $C^\ast$-algebra and let $\text{deg}(a)$ denote the degree of the element $a \in A$.
\begin{remark}\label{grading on compacts}
Let $\mathcal{H}$ be a Hilbert space equipped with an orthogonal decomposition 
\[\mathcal{H} = \mathcal{H}_0 \oplus \mathcal{H}_1.\]
Then the $C^\ast$-algebra $\mathcal{K}(\mathcal{H})$ of compact operators on $\mathcal{H}$ is graded. We say $T: \mathcal{H} \rightarrow \mathcal{H}$ is \emph{even} if $T[\mathcal{H}_0] \subseteq \mathcal{H}_0$ and $T[\mathcal{H}_1] \subseteq \mathcal{H}_1$, and \emph{odd} if $T[\mathcal{H}_0] \subseteq \mathcal{H}_1$ and $T[\mathcal{H}_1] \subseteq \mathcal{H}_0$.
\end{remark}
One can also define the graded tensor product $A \widehat{\otimes} B$:-

\begin{definition}\label{graded tensor product}
Let $A$ and $B$ be graded $C^\ast$-algebras with gradings $\delta_A$ and $\delta_B$ respectively. Then  define $A \widehat{\otimes} B$ to be the completion of the algebraic tensor product of $A$ and $B$ in the norm 
\[|| \sum_i a_i \otimes b_i || = \sup || \sum_i \varphi(a_i) \psi(b_i)||\]  
where $\varphi \colon A \rightarrow C$, $\psi \colon A \rightarrow C$ are graded $\ast$-homomorphisms to a common graded $C^\ast$-algebra $C$. 
We equip $A \widehat{\otimes} B$ with involution, multiplication and grading defined by:
\begin{enumerate}
\item $(a \widehat{\otimes} b)^{\ast} = (-1)^{\text{deg}(a) \text{deg}(b)} a^{\ast} \otimes~  b^{\ast}$
\item $(a \widehat{\otimes} b)(c \widehat{\otimes} d) =  (-1)^{\text{deg}(b) \text{deg}(c)}(ac \otimes bd)$
\item $\gamma(a \widehat{\otimes} b) = \alpha(a) \otimes \beta(b)$
\end{enumerate}
Extending by linearity gives $A \widehat{\otimes} B$. 
\end{definition}
Due to Palmer~\cite{Pal84}, we have the following definition:
\begin{definition}
A \emph{real $C^\ast$-algebra} $A$ is a real Banach algebra with involution satisfying the $C^\ast$-identity and additionally the property that $1+x^\ast x$ is invertible for all $x \in A$. 
\end{definition}

Now we are almost ready to state the definition of the real $K$-theory groups. 
Let $C([0,1],B)$ denote the set of continuous functions from $[0,1]$ to $B$. Then recall that a \emph{homotopy} between two graded $\ast$-homomorphisms $\varphi, \psi \colon A \rightarrow B$ is a graded $\ast$-homomorphism $\theta \colon A \rightarrow C([0,1],B)$ such that 
\[\theta(a)(0) = \varphi(a) ~ \text{and} ~ \theta(a)(1) = \psi(a).\]
Denote the set of homotopy classes of graded $\ast$-homomorphisms from $A$ to $B$ by $[A,B]$. 

Consider the algebra of continuous real valued functions that vanish at infinity, $C_0(\mathbb{R})$, then we can equip this with the grading that even functions are even and odd functions are odd. Denote this algebra by $\mathcal{S}$.
\begin{definition}
For a real graded $C^\ast$-algebra $A$, the $K$-theory groups are given by 
\[K_n(A) = [\mathcal{S}, \Sigma^n A \widehat{\otimes} \mathcal{K}(\mathcal{H})],\]
for $n \geq 0$, where $\mathcal{K}(\mathcal{H})$ denotes the compact operators on some real graded Hilbert space with grading as in Remark~\ref{grading on compacts}. Here $\Sigma A = C_0(\mathbb{R}) \widehat{\otimes} A$ where $C_0(\mathbb{R})$ has the trivial grading, and $\Sigma^n A$ is defined iteratively. 
\end{definition}
The above definition is taken from~\cite{GH04}.

Now to formulate the $E$-theory groups we need the notion of a graded asymptotic morphisms and  homotopy of these. 

\begin{definition}
Let $A$ and $B$ be real graded $C^\ast$-algebras with gradings $\delta_A$ and $\delta_B$ respectively. A \emph{graded asymptotic morphism} $\varphi \colon A \dashrightarrow B$ is a family of functions $\{\varphi_t\}_{t \in [1,\infty)} \colon A \dashrightarrow B$ such that: 
\begin{enumerate}
\item the map $t \mapsto \varphi_t(a)$, from $[1,\infty)$ to  $B$ is continuous and bounded for each $a \in A$, 
\item $\lim_{t \to \infty} || \varphi_t (ab) - \varphi_t (a)\varphi_t(b) || = 0,$ for each $a,b \in A$, 
\item $\lim_{t \to \infty} || \varphi_t (a + \lambda b) - \varphi_t (a) - \lambda \varphi_t (b) || = 0,$ for each $a,b \in A$, $\lambda \in \mathbb{R}$, 
\item $\lim_{t \to \infty} || \varphi_t (a^*) - \varphi_t (a)^* || = 0,$ for each $a \in A$, 
\item $\lim_{t \to \infty} || \varphi_t(\delta_A(a)) - \delta_B(\varphi_t(a)) || = 0$ for each $a \in A$. 
\end{enumerate}
\end{definition}

Note that we can define $A \widehat{\otimes} B$ in terms of graded asymptotic morphisms by replacing the graded $\ast$-homomorphisms $\varphi, \psi$ in Definition~\ref{graded tensor product} by graded asymptotic morphisms. 

Before we state what a homotopy of graded asymptotic morphisms is, we first consider the composition of a graded asymptotic morphisms and a $\ast$-homomorphism. 
It will become reasonable to just consider the following two morphisms: 
\[\alpha_t \colon\mathcal{S} \widehat{\otimes} A \rightarrow B ~ \text{and} ~ \beta \colon \mathcal{S} \widehat{\otimes} B \dashrightarrow C.\]
For this we need to construct a $\ast$-homomorphism 
\[\Delta \colon \mathcal{S} \rightarrow \mathcal{S} \widehat{\otimes} \mathcal{S},\]
as in~\cite{GH04}. 
First notice that the functions 
\[u(x)= e^{-x^2} ~\text{and} ~ v(x) = x e^{-x^2}\]
are functions contained in $\mathcal{S}$ and $u$ is even and $v$ is odd. 

Now set $A$ to be the algebra generated by $u$ and $v$. Then by Theorem~B in~\cite{Sim63} we have the following result: 

\begin{lemma}\label{Agenerates}
The algebra $A$ is dense in $\mathcal{S}$. 
\end{lemma}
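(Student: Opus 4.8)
The plan is to apply a Stone--Weierstrass-type density criterion for subalgebras of $C_0(X)$ with $X$ locally compact Hausdorff --- which is exactly the role played here by Theorem~B of~\cite{Sim63}: a $\ast$-subalgebra of $C_0(X)$ is dense provided it separates the points of $X$ and has no common zero in $X$. Here $X = \mathbb{R}$ and $C_0(X) = \mathcal{S}$. First I would record the easy structural points: $A \subseteq \mathcal{S}$ because $u, v \in \mathcal{S}$ and $\mathcal{S}$ is closed under pointwise products; and $A$ is a $\ast$-subalgebra because the involution on $\mathcal{S}$ is complex conjugation, which is the identity on the real-valued generators $u$ and $v$ and hence on all of $A$. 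So the two hypotheses of the criterion are all that remains to check.

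For the common-zero condition, note $u(x) = e^{-x^2} > 0$ for every $x \in \mathbb{R}$, so $A$ vanishes at no point. For point separation, fix $x \neq y$ in $\mathbb{R}$. If $x^2 \neq y^2$ then $u(x) = e^{-x^2} \neq e^{-y^2} = u(y)$, so $u$ separates $x$ and $y$. Otherwise $x^2 = y^2$ forces $y = -x$ with $x \neq 0$, and then $v(x) = x e^{-x^2} \neq -x e^{-x^2} = v(y)$, so $v$ separates them. Hence $A$ separates points, and Theorem~B of~\cite{Sim63} yields $\overline{A} = \mathcal{S}$.

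There is no genuine obstacle in this argument; the only points deserving care are (a) invoking the non-unital ($C_0$) form of Stone--Weierstrass rather than the compact-space version, since $\mathcal{S}$ has no unit, and (b) observing that, because we work with real-valued functions, the usual ``closed under conjugation'' hypothesis is automatic and needs no separate verification. As a side remark that is not needed for the statement but is useful later, one can also note that $A$ is a graded subalgebra of $\mathcal{S}$: it is the linear span of the homogeneous functions $x^{\ell} e^{-m x^2}$ with $0 \le \ell \le m$, each of which is even or odd according to the parity of $\ell$.
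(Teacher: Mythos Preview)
Your proof is correct and follows exactly the approach indicated in the paper, which simply cites Theorem~B of~\cite{Sim63} without spelling out the verification; you have supplied the routine checks (no common zero via $u>0$, point separation via $u$ or $v$) that the paper leaves implicit. The only cosmetic point is that in the real setting the involution on $\mathcal{S}$ is literally the identity rather than ``complex conjugation acting trivially,'' but this does not affect the argument.
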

Hence $u$ and $v$ generate $\mathcal{S}$. 

Let $\mathcal{S}_R$ be the set of continuous functions on the interval $[-R,R]$; we have a surjection $\pi \colon \mathcal{S} \rightarrow \mathcal{S}_R$ defined by restriction. Let $X_R \in \mathcal{S}_R$ be the function $x \mapsto x$, then note that $X_R$ is odd. Now if $f \in \mathcal{S}$, by functional calculus we have an element 
\[f(X_R \widehat{\otimes} 1 + 1 \widehat{\otimes} X_R) \in \mathcal{S}_R \widehat{\otimes} \mathcal{S}_R,\]
where $1$ denotes the function $1$. Then by~\cite{GH04} we have a graded $\ast$-homomorphism $\Delta \colon \mathcal{S} \rightarrow \mathcal{S} \widehat{\otimes} \mathcal{S}$ such that \[\Delta(u) = u \widehat{\otimes} u ~~~ \textit{and}~~~ \Delta(v) = u \widehat{\otimes} v +  v \widehat{\otimes} u.\]

Then we can define the composition of $\ast$-homomorphism $\beta$ and an asymptotic morphism $\alpha_t$ by:- 

\[\xymatrixcolsep{3pc}\xymatrix{\mathcal{S} \widehat{\otimes} A \ar[r]^{\Delta \widehat{\otimes} \text{id}_{A}} & \mathcal{S} \widehat{\otimes} \mathcal{S} \widehat{\otimes} A \ar@{.>}[r]^{\text{id}_{\mathcal{S}} \widehat{\otimes} \alpha_t}& \mathcal{S} \widehat{\otimes} B \ar[r]^{\beta}& C .} \]
 We define the composition of he form $\beta \circ \alpha_t$ similarly. 

\begin{definition}
A \emph{homotopy} of graded asymptotic morphisms $\varphi_t, \psi_t \colon A \dashrightarrow B$ is a graded asymptotic homomorphism $\theta_t \colon A \dashrightarrow C([0,1],B)$ such that 
\[\theta_t(a)(0) = \varphi_t(a) ~ \text{and} ~ \theta_t(a)(1) = \psi_t(a).\]
\end{definition}
Denote the set of homotopy classes of graded asymptotic morphisms from $A$ to $B$ by $\llbracket A,B \rrbracket$. 
Then the definition, from~\cite{GH04}, of the $E$-theory groups is given by: 
\[E^n(A,B) = \llbracket \mathcal{S} \widehat{\otimes} A \widehat{\otimes} \mathcal{K}(\mathcal{H}), \Sigma^n B  \widehat{\otimes} \mathcal{K}(\mathcal{H}) \rrbracket, \]
where $\Sigma^n B$ for $n \geq 0$ is the $n$-fold suspension of a $C^\ast$-algebra and its grading comes from $B$. 
For information on why these form groups see Lemma~2.1 in~\cite{GH04} for a proof of additive inverses. There is also a full description of this in the author's PhD thesis. 

Additionally we note that there is an $E$-theory product 
\[E(A,B) \times  E(B,C) \rightarrow E(A,B),\]
which is defined composition at the level of homotopy of asymptotic morphisms. We denote this by $x \times y$ where $x \in E(A,B)$ and $y \in E(B,C)$. 

\section{Bott periodicity}
Here we extend the Bott map of Guentner and Higson~\cite{GH04}. Just as in that article we start by thinking about the Bott periodicity in $K$-theory and then extend it to $E$-theory. 

Let $\text{Cliff}(V)$ denote the \emph{Clifford algebra} of a finite dimensional Euclidean real vector space $V$. Then define a function $P \colon V \rightarrow \text{Cliff}(V)$ by $P(v) = v$. Notice that $P(v)^2 = ||v||^2 $. 
Now we require a function in $C_0(V, \text{Cliff}(V))$  so using functional calculus, take $f \in \mathcal{S}$ then we define the map $f(P)  \colon V \rightarrow \text{Cliff}(V)$ by: 
\[v \mapsto f(P(v)).\]
This gives a $\ast$-homomorphism from $\mathcal{S}$ to $C_0(V, \text{Cliff}(V))$, and hence the following definition:- 
\begin{definition}
We define the \emph{Bott element} $b$ to be the class in the group $K(C_0(V, \text{Cliff}(V)))$ induced from the $\ast$-homomorphism $\beta \colon \mathcal{S} \rightarrow C_0(V, \text{Cliff}(V))$ defined by $f \mapsto f(P)$.
\end{definition}
\begin{thm}\label{Bott periodicty statement in beta}
For a finite dimensional Euclidean vector space $V$ over $\mathbb{R}$, the Bott map induces the isomorphism of $K$-theory 
\[\beta_{\ast} \colon K(\mathbb{R}) \rightarrow K(\mathbb{R} \widehat{\otimes} C_0(V,\text{Cliff}(V))),\]
defined by the formula $\beta_{\ast}(x) = x \times b$, where $x \in K(\mathbb{R})$. 
\end{thm}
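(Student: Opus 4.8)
The plan is to follow the strategy announced in the introduction: build the asymptotic morphism $\alpha_t \colon \mathcal{S} \widehat{\otimes} C_0(V,\text{Cliff}(V)) \rightarrow \mathcal{K}(\mathcal{H}(V))$, show that the composite $\alpha_t \circ \beta$ is homotopic to an explicit, easily analysed asymptotic morphism $\gamma_t \colon \mathcal{S} \rightarrow \mathcal{K}(\mathcal{H}(V))$ that is a $K$-theory equivalence, and separately show that $\beta \circ \alpha_t$ induces the identity; together these give that $\beta_\ast$ is an isomorphism. Concretely, $\mathcal{H}(V)$ should be taken to be $L^2(V) \widehat{\otimes} \text{Cliff}(V)$ (graded appropriately), and $\alpha_t$ is defined via the unbounded operator $D + tC$, where $D$ is the Dirac-type operator on $V$ (Clifford multiplication composed with differentiation) and $C$ is the Clifford multiplication operator $C\xi(v) = v\cdot\xi(v)$; more precisely, rescaling $v \mapsto v/\sqrt{t}$ or $v \mapsto v/t$ as in Guentner--Higson, one sets $\alpha_t(f \widehat{\otimes} g) = f(t^{-1}(D + C))\, M_g$ with $M_g$ multiplication by $g$, and checks that this is an asymptotic morphism using the standard estimates on the resolvent of $D+C$ (whose square is the harmonic oscillator $-\Delta + \|v\|^2 + (\text{Clifford grading term})$, with discrete spectrum and a one-dimensional lowest eigenspace).

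The first step is to carry out the real-graded analogue of the functional-calculus estimates in \cite{GH04}: verify that $t \mapsto \alpha_t(a)$ is continuous and bounded, that the asymptotic multiplicativity, linearity, $\ast$-compatibility and grading-compatibility hold. This is essentially the same computation as in the complex case, the only new input being that everything must be done over $\mathbb{R}$ with the real Clifford algebra $\text{Cliff}(V)$; the harmonic-oscillator spectral analysis is insensitive to the ground field, so this goes through. The second step is the identification $\alpha_t \circ \beta \simeq \gamma_t$: using the comultiplication $\Delta$ with $\Delta(u) = u\widehat\otimes u$, $\Delta(v) = u\widehat\otimes v + v\widehat\otimes u$, one computes that the composite sends a function $f \in \mathcal{S}$ to (asymptotically) $f$ applied to the operator $t^{-1}(D + C) + (\text{lower order})$, and a homotopy that scales away the Dirac part $D$ shows this is homotopic to the map $\gamma_t$ built purely from the harmonic oscillator $t^{-1}(-\Delta + \|v\|^2)^{1/2}$ acting on $L^2(V)$ tensored with the Clifford generator; $\gamma_t$ factors through a rank-one projection onto the ground state times $\text{Cliff}(V)$, and a Mehler-kernel / spectral-flow argument shows $\gamma_\ast$ is an isomorphism $K(\mathbb{R}) \xrightarrow{\cong} K(\mathbb{R}\widehat\otimes\mathcal{K}(\mathcal{H}(V)))$.

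The third step handles $\beta \circ \alpha_t$. Here the Atiyah rotation trick \cite{Ati68} is used: on $V \oplus V$ one has two copies of the Bott construction, and rotating one factor into the other by the path of rotations $R_\theta$ in $\mathrm{SO}(2)$ acting on $V\oplus V$ produces a homotopy connecting $\beta\circ\alpha_t$ (after stabilising) to the identity, exactly as in the classical argument; the grading and reality are preserved along the rotation path, so the real-graded case is no different in structure. Combining the two composites gives that $\beta_\ast$ is invertible, with the formula $\beta_\ast(x) = x \times b$ following from unwinding the definition of the $E$-theory (equivalently $K$-theory) product with the class $b$ of $\beta$.

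The main obstacle I expect is the second step — the clean homotopy $\alpha_t\circ\beta \simeq \gamma_t$ and the proof that $\gamma_t$ is a $K$-equivalence — since it requires keeping careful track of the graded tensor-product sign conventions and the Clifford-algebra grading term in the square of $D+C$ while performing the rescaling homotopy, and one must ensure all intermediate families remain genuine (graded, real) asymptotic morphisms. The analytic estimates themselves are routine once the correct operator is written down, but assembling them into homotopies in the real-graded setting, where $\text{Cliff}(V)$ can be noncommutative and its grading interacts with the suspension coordinate, is where the care is needed.
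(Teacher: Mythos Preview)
Your overall architecture matches the paper: construct $\alpha_t$, show $\alpha_\ast\beta_\ast = \mathrm{id}$ via an auxiliary $\gamma_t$, then use a rotation-type argument for the other composite. But your concrete definition of $\alpha_t$ is wrong in a way that breaks the second step.

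In the paper $\alpha_t(f \widehat\otimes h) = f(t^{-1}D)\,M_{h_t}$, where $D$ is the Dirac operator \emph{alone} and the function is rescaled, $h_t(v) = h(t^{-1}v)$. The Bott operator $B = C + D$ does not appear in $\alpha_t$; it appears only in $\gamma_t(f) = f(t^{-1}B)$. The content of the second step is precisely that when you compose $\alpha_t$ with $\beta$ (which becomes $f \mapsto f(C)$ at the level of multiplication operators), the $C$ contributed by $\beta$ and the $D$ contributed by $\alpha_t$ combine asymptotically to give $B$. The mechanism is Mehler's formula, yielding the asymptotic identities $e^{-t^{-2}D^2}e^{-t^{-2}C^2} \sim e^{-t^{-2}B^2}$ and $t^{-1}(C+D)e^{-t^{-2}C^2}e^{-t^{-2}D^2} \sim t^{-1}Be^{-t^{-2}B^2}$ on the generators $u,v$; there is no ``homotopy that scales away the Dirac part''. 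If you build $\alpha_t$ from $D+C$ as you propose, composing with $\beta$ brings in $C$ a second time and you do not land on $\gamma_t$. The subsequent homotopy of $\gamma_t$ to $f \mapsto f(0)p$ (with $p$ the rank-one projection onto $\ker B$) uses the discrete spectrum of $B$; that is where the harmonic-oscillator analysis enters, not in the construction of $\alpha_t$ itself.

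A smaller discrepancy: the paper's rotation trick is the algebraic version via the flip map $l$ on $\mathcal{C}(V)\widehat\otimes\mathcal{C}(V)$ together with the Higson--Kasparov--Trout lemma that $l$ is homotopic to $h_1\widehat\otimes h_2 \mapsto h_1\widehat\otimes\iota h_2$. A commuting-diagram argument shows that $\beta\circ\alpha_t$ (after a flip) is asymptotic to $\gamma\widehat\otimes\iota$, hence induces an isomorphism; combined with $\alpha_\ast\beta_\ast = \mathrm{id}$ this forces $\beta_\ast$ to be invertible. Your $\mathrm{SO}(2)$ description on $V\oplus V$ is the geometric content behind that lemma, but the paper does not unwind it that far.
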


To prove this as stated in the introduction, we need a collection of ingredients described in the next few pages.

Denote by $\mathcal{H}(V)$ the real Hilbert space $L^2(V, \text{Cliff}(V))$ which is the Hilbert space of square-integrable $\text{Cliff}(V)$-valued functions on $V$. Note that this is graded and the grading is the one coming from that of $\text{Cliff}(V)$. 

The following definition may seem irrelevant to our aim but it will be very useful for defining the operators we need to define the maps inducing Bott periodicity. 
\begin{definition}
Let $e, f$ be elements in $V$. 
Define linear operators $e, \hat{f} \colon \text{Cliff}(V) \rightarrow \text{Cliff}(V)$ on a finite-dimensional Hilbert space under $\text{Cliff}(V)$ by
\[e(x) = e x,\] 
\[\hat{f}(x) = (-1)^{\text{deg}(x)} x f .\]
\end{definition}
Let $\text{Sch}(V)$ denote the dense subspace of $\mathcal{H}(V)$ of Schwartz-class $\text{Cliff}(V)$-valued functions. For a definition of Schwartz-class see Section~2.1 of \cite{HRT}.

\begin{definition}
Define the Dirac operator $D \colon \text{Sch}(V) \rightarrow \mathcal{H}(V)$  by 
\[ (Df)(v) = \sum_{i=1}^{n} \hat{e_i} (\frac{\partial f}{\partial{x_i}}(v)),\]
where the $e_i$'s form an orthonormal basis of $V$ and each $x_i$ is the corresponding coordinate in $V$.  
\end{definition}

For $h \in C_0(V,\text{Cliff}(V))$ let $M_{h}$ denote the operator of pointwise multiplication by $h$ on the space $\mathcal{H}(V)$. So \[M_{h}(g)(v) = h(v) g(v).\]

\begin{lemma}\label{compactoperator}
The Dirac operator on $V$ is formally self-adjoint. If $f \in \mathcal{S}$ and $h \in C_0(V,\text{Cliff}(V))$, the product $f(D)M_h$ is a compact operator on $\mathcal{H}(V)$.
\end{lemma}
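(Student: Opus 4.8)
The plan is to establish the two assertions separately. For formal self-adjointness, I would take $f,g\in\text{Sch}(V)$ and compute $\langle Df,g\rangle$ by expanding the definition of $D$ and integrating by parts in each coordinate $x_i$. Two elementary facts make this go through. First, each operator $\hat e_i\colon\text{Cliff}(V)\to\text{Cliff}(V)$ is \emph{skew}-adjoint for the canonical inner product on $\text{Cliff}(V)$: writing $\hat e_i(x)=(-1)^{\deg x}xe_i$, using that right multiplication by $e_i$ is self-adjoint (traciality of the inner product together with $e_i^{\,*}=e_i$, $e_i^2=1$), and that $\hat e_i$ shifts degree by one so that the sign $(-1)^{\deg x}$ flips, one gets $\hat e_i^{\,*}=-\hat e_i$. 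Second, $\partial_i:=\partial/\partial x_i$ commutes with $\hat e_i$, since on each graded component $\hat e_i$ is right multiplication by a constant. Then, abbreviating $\partial_i=\partial/\partial x_i$,
\[ \langle Df,g\rangle \;=\; \sum_i \langle \partial_i f,\hat e_i^{\,*} g\rangle \;=\; -\sum_i \langle \partial_i f,\hat e_i g\rangle \;=\; \sum_i \langle f,\hat e_i\,\partial_i g\rangle \;=\; \langle f,Dg\rangle, \]
the third equality being integration by parts, whose minus sign cancels the one from $\hat e_i^{\,*}=-\hat e_i$. I will also invoke the standard fact that $D$ is essentially self-adjoint on $\text{Sch}(V)$, so that $f(D)$ is unambiguously defined by functional calculus for $f\in\mathcal S$.

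For compactness, the strategy is a double reduction. Fix $h\in C_0(V,\text{Cliff}(V))$ and set $\mathcal J_h=\{f\in\mathcal S: f(D)M_h\text{ is compact}\}$. Since $\|f(D)\|\le\|f\|_\infty$ and $\|M_h\|\le\|h\|_\infty$, and since $f(D)\bigl(g(D)M_h\bigr)$ is compact whenever $f(D)$ is bounded and $g(D)M_h$ is compact, $\mathcal J_h$ is a norm-closed ideal of $\mathcal S=C_0(\mathbb R)$. A closed ideal of $C_0(\mathbb R)$ containing a nowhere-vanishing function must be all of $C_0(\mathbb R)$, so it suffices to show $u(D)M_h=e^{-D^2}M_h$ is compact, where $u(x)=e^{-x^2}\in\mathcal S$. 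Next, the set of $h$ for which $e^{-D^2}M_h$ is compact is a norm-closed subspace of $C_0(V,\text{Cliff}(V))$ (again using $\|e^{-D^2}M_h\|\le\|h\|_\infty$, as $\|e^{-D^2}\|\le 1$), so it is enough to treat $h$ with compact support, these being dense.

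It then remains to prove $e^{-D^2}M_h$ is compact for compactly supported $h$. Here I would use that $D^2=-\Delta$ on $\mathcal H(V)=L^2(V)\widehat\otimes\text{Cliff}(V)$: the cross terms vanish because $\hat e_i\hat e_j=-\hat e_j\hat e_i$ for $i\ne j$ while $\partial_i\partial_j$ is symmetric, and $\hat e_i^2=-1$. Hence $e^{-D^2}$ is the time-$1$ heat operator $e^{\Delta}$ on the $L^2(V)$-factor, i.e.\ convolution by the Gaussian kernel $k_1(v-w)=(4\pi)^{-n/2}e^{-\|v-w\|^2/4}$, and $e^{-D^2}M_h$ has operator-valued integral kernel $(v,w)\mapsto k_1(v-w)\,L_{h(w)}$, where $L_{h(w)}$ is left Clifford multiplication. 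Since $\|L_{h(w)}\|_{\mathrm{HS}}\le C\|h(w)\|$ and $h$ has compact support, $\iint\|k_1(v-w)L_{h(w)}\|_{\mathrm{HS}}^2\,dv\,dw\le C^2\|k_1\|_{L^2}^2\|h\|_{L^2}^2<\infty$, so $e^{-D^2}M_h$ is Hilbert--Schmidt, hence compact. The points I expect to require the most care are the sign bookkeeping in the self-adjointness computation and checking that $\mathcal J_h$ is genuinely a closed ideal containing a nowhere-vanishing function; the identification $D^2=-\Delta$ and the Hilbert--Schmidt estimate are then routine.
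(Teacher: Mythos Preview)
Your proof is correct. The paper itself does not prove this lemma; it simply refers to \cite{GH04}, Lemma~1.8, noting that the complex argument carries over unchanged to the real case, so there is no in-paper proof to compare against. Your argument supplies the details along what is essentially the standard route (and the one taken in \cite{GH04}): a closed-ideal reduction in the $\mathcal S$-variable, a density reduction to compactly supported $h$, the identification $D^2=-\Delta$, and an explicit integral-kernel estimate. The one place your write-up differs in flavour is the choice of test function: over $\mathbb C$ one typically reduces to a resolvent $f(x)=(x\pm i)^{-1}$, whereas you reduce to $u(x)=e^{-x^2}$, which is the natural choice in the real setting and dovetails with the paper's use of $u$ and $v$ as generators of $\mathcal S$; the resulting heat-kernel Hilbert--Schmidt bound is clean. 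The sign bookkeeping ($\hat e_i^{\,*}=-\hat e_i$, $\hat e_i^2=-1$, anticommutation for $i\ne j$) is handled correctly given the paper's convention $e_i^2=+1$.
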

For a proof see~\cite{GH04}, Lemma 1.8, since this proof is still valid when we take real finite dimensional Euclidean vector spaces. 

It will be convenient to write $h \in C_0(V,\text{Cliff}(V))$ as $h_t$, with $h_t(v) = h(t^{-1}v)$ for $t \in [1, \infty)$. 

\begin{definition}
Define the graded commutator of elements $a,b$ in a real graded $C^\ast$-algebra $A$ by 
\[[a,b] = ab- (-1)^{\text{deg}(a) \text{deg}(b)} ba,\]
which extends by linearity to all elements in $A$. 
\end{definition}
Let $\mathbb{R}_{n,0}$ be the Clifford algebra generated by odd elements $e_1, \ldots, e_n$ such that $e_i^2=1$ and $e_i e_j = -e_j e_i$. Then by substituting $V=\mathbb{R}^n$ the following result makes sense: 
\begin{lemma}\label{doublecommutatordisappears}
For every $f \in \mathcal{S}, h \in C_0(\mathbb{R}^n,\mathbb{R}_{n,0})$, with the finite dimensional Euclidean vector space $\mathbb{R}^n$ having Dirac operator $D$, 
\[\lim_{t \to \infty} || [f(t^{-1}D), M_{h_t}] || = 0, \]
where  $M_{h_t}$ is a bounded linear operator on $\mathcal{H}(\mathbb{R}^n)$ and $f(t^{-1}D)$ is defined using functional calculus of unbounded operators. 
\end{lemma}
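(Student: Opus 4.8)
The plan is to reduce the statement to a density argument plus a direct estimate on the generators of $\mathcal{S}$. First I would observe that the map $f \mapsto [f(t^{-1}D), M_{h_t}]$ is, for each fixed $t$, linear in $f$, and that the set of $f \in \mathcal{S}$ for which $\lim_{t\to\infty}\|[f(t^{-1}D),M_{h_t}]\| = 0$ is a $\ast$-subalgebra of $\mathcal{S}$ that is closed in the supremum norm: closure under sums is immediate, closure under products follows from the Leibniz-type identity $[ab,c] = a[b,c] + (-1)^{\deg(b)\deg(c)}[a,c]b$ together with the uniform bound $\|g(t^{-1}D)\| \le \|g\|_\infty$ from functional calculus and the uniform bound $\|M_{h_t}\| \le \|h\|_\infty$, and norm-closedness follows by an $\varepsilon/3$ argument (approximate $f$ uniformly by $f_k$, control $\|f(t^{-1}D) - f_k(t^{-1}D)\| \le \|f-f_k\|_\infty$). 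By Lemma~\ref{Agenerates} it then suffices to verify the limit for $f$ in the dense subalgebra generated by $u(x) = e^{-x^2}$ and $v(x) = xe^{-x^2}$, and by the algebra property it suffices to check it on $u$ and $v$ themselves.

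Next I would handle the generators. The cleanest route is to pass through the resolvent: since $\mathcal{S}$ is also generated (as a $C^\ast$-algebra) by the resolvent functions $(x\pm i)^{-1}$, one can alternatively run the whole density argument with those, and then the commutator identity
\[
[(t^{-1}D \pm i)^{-1}, M_{h_t}] = -(t^{-1}D\pm i)^{-1}\,[t^{-1}D, M_{h_t}]\,(t^{-1}D\pm i)^{-1}
\]
reduces everything to estimating $\|[t^{-1}D, M_{h_t}]\|$ on a suitable core, since the outer resolvents are contractions. Here the key computation is that $[D, M_h]$ is itself an order-zero operator: for $h$ smooth with compact support, $[D, M_h] = M_{c(dh)}$ where $c(dh)$ is Clifford multiplication by the gradient one-form of $h$ (this is exactly the standard Bochner–Lichnerowicz-type commutator computation, using that $D = \sum \hat{e_i}\,\partial_i$), so it is bounded with $\|[D,M_h]\| \le \|dh\|_\infty$. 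Scaling gives $[t^{-1}D, M_{h_t}] = t^{-1}[D, M_{h_t}] = t^{-1} M_{c(d(h_t))}$, and since $d(h_t)(v) = t^{-1}(dh)(t^{-1}v)$ we get $\|[t^{-1}D, M_{h_t}]\| \le t^{-2}\|dh\|_\infty \to 0$. For general $h \in C_0(\mathbb{R}^n, \mathbb{R}_{n,0})$ one first approximates $h$ uniformly by smooth compactly supported $h^{(k)}$ and uses $\|[f(t^{-1}D), M_{h_t} - M_{h_t^{(k)}}]\| \le 2\|f\|_\infty \|h - h^{(k)}\|_\infty$ to reduce to the smooth case, via another $\varepsilon/3$ argument.

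I expect the main obstacle to be the technical bookkeeping around unbounded operators: making precise that $[f(t^{-1}D), M_{h_t}]$ extends to a bounded operator and that the resolvent commutator identity holds rigorously requires knowing that $M_h$ preserves a common core for $D$ (e.g. the Schwartz class $\mathrm{Sch}(\mathbb{R}^n)$) and that $D$ is essentially self-adjoint there, so that functional calculus is available and the formal manipulations above are legitimate. Since the problem is posed on $\mathbb{R}^n$ with the flat Dirac operator, essential self-adjointness on $\mathrm{Sch}(\mathbb{R}^n)$ is classical, and $M_h$ with $h$ smooth of compact support does preserve $\mathrm{Sch}(\mathbb{R}^n)$; the commutator $[D, M_h]$ computed on this core is the bounded operator $M_{c(dh)}$, which then closes up to a genuinely bounded operator on all of $\mathcal{H}(\mathbb{R}^n)$. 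Once these domain issues are pinned down, everything reduces to the two elementary estimates above together with the density argument, and the conclusion follows. I note that this also closely parallels Lemma~1.9 of~\cite{GH04}, whose proof carries over essentially verbatim to the real setting since none of the estimates use complex scalars.
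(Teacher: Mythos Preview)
Your proposal is correct and follows essentially the approach the paper adopts, which is simply to invoke the argument in \cite{GH04}; your sketch is in fact a faithful unpacking of that argument (density reduction to generators, the zeroth-order commutator $[D,M_h]=M_{c(dh)}$ for smooth $h$, and the $t^{-2}$ scaling). One small caveat specific to the real setting: the resolvent functions $(x\pm i)^{-1}$ are not elements of the real algebra $\mathcal{S}=C_0(\mathbb{R},\mathbb{R})$, so to run the resolvent version you should either use real generators such as $(x^2+1)^{-1}$ and $x(x^2+1)^{-1}$ (for which the analogous commutator identity holds), or pass to the complexification for the norm estimate and then restrict; either fix is routine and leaves your argument intact.
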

Proof can be taken from~\cite{GH04}.

Now we can state the map $\alpha$ we require to prove Bott periodicity.

\begin{proposition}
There exists a unique asymptotic morphism (up to equivalence)
\[\alpha_t \colon \mathcal{S} \widehat{\otimes}  C_0(V,\text{Cliff}(V)) \dashrightarrow \mathcal{K}(\mathcal{H}(V))\]
defined by 
\[\alpha_t(f \widehat{\otimes} h) = f(t^{-1}D) M_{h_t},\]
for $t \geq 1$.
\end{proposition}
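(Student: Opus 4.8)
The plan is to verify directly that the family of maps $\alpha_t(f \widehat{\otimes} h) = f(t^{-1}D)M_{h_t}$, defined on the algebraic tensor product $\mathcal{S} \odot C_0(V,\text{Cliff}(V))$, satisfies the axioms of a graded asymptotic morphism and then extends to the graded tensor product. First I would observe that Lemma~\ref{compactoperator} already guarantees that each $\alpha_t(f \widehat{\otimes} h) = f(t^{-1}D)M_{h_t}$ genuinely lands in $\mathcal{K}(\mathcal{H}(V))$ (note $f(t^{-1}D)$ is the functional calculus applied to the rescaled self-adjoint operator $t^{-1}D$, so this is just $g(D)$ for $g(x) = f(t^{-1}x) \in \mathcal{S}$). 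Continuity and boundedness of $t \mapsto f(t^{-1}D)M_{h_t}$ in $t$ (axiom 1) follows from continuity of functional calculus in the function argument together with $\|f(t^{-1}D)\| \le \|f\|_\infty$ and $\|M_{h_t}\| \le \|h\|_\infty$, uniformly in $t$; one reduces to $f = u, v$ and $h$ in a dense class using Lemma~\ref{Agenerates}. The grading axiom (5) is immediate since $D$ is odd, $M_{h_t}$ has the degree of $h$, and functional calculus by an even/odd function preserves/reverses degree, so $\alpha_t$ is exactly graded, not merely asymptotically so.

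The heart of the argument is multiplicativity (axiom 2). On elementary tensors one must show
\[
\lim_{t\to\infty} \bigl\| f_1(t^{-1}D)M_{(h_1)_t}\, f_2(t^{-1}D)M_{(h_2)_t} - (f_1 f_2)(t^{-1}D)M_{(h_1 h_2)_t} \bigr\| = 0.
\]
The strategy is to commute $M_{(h_1)_t}$ past $f_2(t^{-1}D)$ using Lemma~\ref{doublecommutatordisappears} (or rather its analogue for general $V$ and $\text{Cliff}(V)$ referenced via \cite{GH04}): since $\|[f_2(t^{-1}D), M_{(h_1)_t}]\| \to 0$, we may replace the product by $f_1(t^{-1}D) f_2(t^{-1}D) M_{(h_1)_t} M_{(h_2)_t}$ up to an error vanishing in norm, and then use $f_1(t^{-1}D)f_2(t^{-1}D) = (f_1f_2)(t^{-1}D)$ exactly (functional calculus is a homomorphism) and $M_{(h_1)_t}M_{(h_2)_t} = M_{(h_1h_2)_t}$ exactly. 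The graded-commutator sign bookkeeping from Definition (graded tensor product, item 2) must be tracked so that the Koszul sign in the product on $\mathcal{S} \widehat{\otimes} C_0(V,\text{Cliff}(V))$ matches the sign produced when moving the odd parts of $M_{(h_1)_t}$ past the odd part of $f_2(t^{-1}D)$; this is the one genuinely delicate sign check. Linearity (axiom 3) is exact on $\mathcal{S}\odot C_0$ hence trivially asymptotic, and the $\ast$-axiom (4) follows from self-adjointness of $D$ together with the grading sign in Definition (graded tensor product, item 1), again modulo a sign check.

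Having established the axioms on the algebraic tensor product, I would extend $\alpha_t$ to the graded tensor product $\mathcal{S}\widehat{\otimes} C_0(V,\text{Cliff}(V))$ using the uniform bound $\|\alpha_t(f\widehat{\otimes}h)\| \le \|f\|_\infty\|h\|_\infty$ and the fact that, since $\alpha_t$ is a genuine graded $\ast$-homomorphism in the limit, it factors through the (maximal, graded) $C^\ast$-norm defining $\widehat{\otimes}$; continuity of the axioms under norm limits then gives an asymptotic morphism on all of $\mathcal{S}\widehat{\otimes} C_0(V,\text{Cliff}(V))$. For uniqueness up to equivalence, I would note that any two asymptotic morphisms agreeing with the prescribed formula on the generating set $\{u,v\}\widehat{\otimes}\{h\}$ agree asymptotically everywhere by density (Lemma~\ref{Agenerates}) and the asymptotic-morphism axioms, and that a linear homotopy $s\mapsto \alpha^{(1)}_t + s(\alpha^{(2)}_t - \alpha^{(1)}_t)$ (or the standard reparametrization trick) connects them. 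The main obstacle I anticipate is not any single estimate but the careful propagation of Koszul signs through the commutator manipulation in the multiplicativity step, since $D$, the multiplication operators, and the tensor-product multiplication all carry gradings; everything else reduces to the compactness result Lemma~\ref{compactoperator}, the asymptotic-commutator vanishing Lemma~\ref{doublecommutatordisappears}, and standard functional-calculus continuity.
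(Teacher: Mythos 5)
Your proposal follows the same route as the paper: define $\alpha_t$ on the algebraic tensor product, invoke Lemma~\ref{doublecommutatordisappears} for the asymptotic multiplicativity, invoke Lemma~\ref{compactoperator} for landing in $\mathcal{K}(\mathcal{H}(V))$, and pass to the completed graded tensor product by universality/norm bounds. The paper's own proof is terse (it compresses all of axioms (2)--(5) into the phrase ``asymptotically $\ast$-linear''), so your proposal is essentially a faithful expansion of it rather than a different argument.
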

\begin{proof}
Firstly to prove this we consider $\alpha_t$ defined from the algebraic tensor product of $\mathcal{S} \widehat{\odot} C_0(V,\text{Cliff}(V)) $ to $\mathcal{K}(\mathcal{H}(V))$ by, 
\[\alpha_t(f \widehat{\odot} h) = f(t^{-1}D) M_{h_t}.\]
Then we have a linear map, which is asymptotically $\ast$-linear by Lemma~\ref{doublecommutatordisappears} and universality of the tensor product gives an asymptotic morphism $$\alpha_t(f \widehat{\otimes} h) = f(t^{-1}D) M_{h_t}.$$ Also by Lemma~\ref{compactoperator}, the image is contained in the compact operators. Hence we have our required asymptotic  morphism. 
\end{proof}

\begin{definition}
On a finite dimensional Euclidean real vector space we define the \emph{Clifford operator} $C\colon \text{Sch}(V) \rightarrow \mathcal{H}(V)$ by 
\[(Cf)(v) = \sum_{i=1}^n x_i e_i(f(v)),\]
where $e_i$ form an orthornormal basis for $V$, $x_i$ are the corresponding coordinates in $V$. Here we use functional calculus to define the functions throughout the summation. 
\end{definition}

\begin{lemma}
The composition of the Bott map and the multiplication operator given by 
\[ \mathcal{S} \xrightarrow{\beta} C_0(V, \text{Cliff}(V)) \xrightarrow{M} \mathcal{B}(\mathcal{H}(V)) ,\]
is defined by 
\[f \mapsto f(C),\]
where $C$ is the Clifford operator and $f \in \mathcal{S}$. 
\end{lemma}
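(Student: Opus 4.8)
The plan is to unwind both sides and then reduce the identity to a check on a generating set of $\mathcal{S}$. First I would note that by definition $\beta(f) = f(P)$ is the $\text{Cliff}(V)$-valued function $v \mapsto f(P(v))$, computed by functional calculus of the self-adjoint element $P(v) = v$; hence the composition $M \circ \beta$ sends $f$ to the multiplication operator $M_{f(P)}$ on $\mathcal{H}(V)$, that is, to $g \mapsto \bigl(v \mapsto f(P(v))\,g(v)\bigr)$. On the other hand, a direct computation from the definition of the Clifford operator gives
\[
(Cg)(v) = \sum_{i=1}^n x_i(v)\, e_i\bigl(g(v)\bigr) = \Bigl(\sum_{i=1}^n x_i(v)\, e_i\Bigr) g(v) = P(v)\, g(v),
\]
so $C$ is exactly the operator of pointwise (left) multiplication by the unbounded $\text{Cliff}(V)$-valued function $P$, densely defined on $\text{Sch}(V)$. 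Thus the lemma reduces to the assertion $f(C) = M_{f(P)}$, i.e. that forming functional calculus of the Clifford operator agrees with applying functional calculus to $P(v)$ inside $\text{Cliff}(V)$ fibrewise.

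Next I would observe that both $f \mapsto f(C)$ and $f \mapsto M_{f(P)}$ are continuous $\ast$-homomorphisms from $\mathcal{S}$ to $\mathcal{B}(\mathcal{H}(V))$: the first because the closure of $C$ is self-adjoint — which, as for any multiplication operator, follows once one checks that $\text{Sch}(V)$ is a core and that each $P(v)$ is self-adjoint in $\text{Cliff}(V)$ — and the second because $f \mapsto f(P)$ is a $\ast$-homomorphism $\mathcal{S} \to C_0(V,\text{Cliff}(V))$ and $h \mapsto M_h$ is a $\ast$-homomorphism into $\mathcal{B}(\mathcal{H}(V))$. Since $\mathcal{S}$ is generated by $u(x) = e^{-x^2}$ and $v(x) = xe^{-x^2}$ by Lemma~\ref{Agenerates}, it therefore suffices to verify the equality on $u$ and on $v$.

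For $u$ I would use that $C^2$ is the operator of multiplication by the \emph{scalar} function $v \mapsto \|v\|^2$, since $P(v)^2 = \|v\|^2$; the functional calculus of such a scalar multiplication operator is literally composition with a function, so $u(C) = e^{-C^2} = M_{e^{-\|\cdot\|^2}}$, and this coincides with $M_{u(P)}$ because $u(P(v)) = e^{-P(v)^2} = e^{-\|v\|^2}$. For $v$, writing $v(x) = x\,u(x)$ with $x\,u(x)$ bounded, one has $v(C) = C\,u(C) = C\,M_{e^{-\|\cdot\|^2}}$; evaluating on $g$ and using $(Ch)(v) = P(v)\,h(v)$ gives $(v(C)g)(v) = P(v)\,e^{-\|v\|^2}\,g(v)$, which is exactly $M_{v(P)}g$ since $v(P(v)) = P(v)\,e^{-P(v)^2} = P(v)\,e^{-\|v\|^2}$. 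This finishes the comparison and hence the proof.

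The main obstacle is the book-keeping with the unbounded operator $C$: one has to confirm that $\text{Sch}(V)$ is a core for $C$ so that $f(C)$ is unambiguous, that $C^2$ really equals multiplication by $\|\cdot\|^2$ as self-adjoint operators, and that $v(C) = C\,u(C)$ as bounded operators. These are all standard facts about multiplication operators and are available from the analogous discussion in~\cite{GH04}. (Alternatively, one can bypass the reduction to generators by realising $\mathcal{H}(V)$ as the field of Hilbert spaces $v \mapsto \text{Cliff}(V)$, noting that $a \mapsto (\text{left multiplication by }a)$ is a unital $\ast$-homomorphism $\text{Cliff}(V) \to \mathcal{B}(\text{Cliff}(V))$ and so intertwines functional calculus fibrewise, and invoking that functional calculus commutes with direct integrals; this is conceptually cleaner but needs slightly more measure-theoretic machinery.)
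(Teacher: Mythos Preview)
Your argument is correct and rests on the same core observation as the paper: the Clifford operator $C$ is the operator of pointwise left multiplication by the unbounded function $P(v)=v$, so that $f(C)$ should coincide with multiplication by $f(P)$. The paper, however, does not reduce to the generators $u$ and $v$; it simply notes that $C$ is essentially self-adjoint and then writes the single chain
\[
(M\beta(f))(g)(v)=M_{\beta(f)}(g)(v)=\beta(f)(v)\,g(v)=f(C)(v)\,g(v)=f(C)g(v),
\]
tacitly invoking the general fact that functional calculus of a multiplication operator is fibrewise functional calculus. Your route is more careful: you make the identity $f(C)=M_{f(P)}$ the explicit target, observe that both sides define continuous $\ast$-homomorphisms on $\mathcal{S}$, and then verify it on $u$ and $v$ using $C^2=M_{\|\cdot\|^2}$. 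This buys you a self-contained argument that does not silently rely on the multiplication-operator/direct-integral principle you mention in your parenthetical alternative; the paper's version is shorter but leaves that principle implicit.
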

\begin{proof}
Since the operator $C$ is essentially self adjoint on the Hilbert space $\mathcal{H}(V)$ we can form the operator $f(C)$ in the bounded linear operators. Then 
\[(M \beta(f))(g)(v) = M_{\beta(f)} (g) (v) = \beta(f)(v) g(v) = f(C)(v)g(v) = f(C)g(v).\] 
\end{proof}
Now we need to prove that $\alpha$ and $\beta$ satisfy the requirements that their induced maps give $\alpha_{\ast}\beta_{\ast}$ is homotopic to $1 \in K(\mathbb{R})$. 
In order to show this we firstly need an equivalent morphism coming from composing $\alpha$ and $\beta$ for which we need a new operator. 

\begin{definition}
We define an operator $B \colon \text{Sch}(V) \rightarrow \text{Sch}(V)$  by 
\[(Bf)(v) = \sum_{i=1}^{n} x_i e_i(f(v)) + \sum_{i=1}^{n} \hat{e_i} (\frac{\partial f}{\partial x_i}(v)).\] 
Call $B$ the \emph{Bott operator} and notice $B = C+D$.
\end{definition}

\begin{definition}
Define the \emph{number operator} $N\colon  \text{Cliff}(V) \rightarrow \text{Cliff}(V)$ by 
\[N = \sum_{i=1}^n \widehat{e_i} e_i,\]
where $e_i$'s form an orthonormal basis for $V$. 
\end{definition} 
We should note that the number operator is bounded and we can extend it to an operator on $\mathcal{H}(V)$ by composition with a map from $V$ to $\text{Cliff}(V)$. 
The following is an easy calculation. 
\begin{proposition}\label{B^2equals}
\[B^2 = C^2 + D^2 +N.\]
\end{proposition}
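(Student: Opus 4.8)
The plan is to expand $B^2 = (C+D)^2 = C^2 + CD + DC + D^2$ and show that the cross term $CD + DC$ equals the number operator $N$. First I would write $CD + DC$ as the graded commutator $[C,D]$ — noting that $C$ is an odd operator (each summand $x_i e_i$ involves the odd Clifford element $e_i$) and $D$ is odd (each summand $\widehat{e_i}\,\partial/\partial x_i$ involves the odd operator $\widehat{e_i}$), so the graded commutator of two odd operators is the anticommutator $CD + DC$. Then I would compute $[C,D]$ on a Schwartz-class function $f$ by substituting the defining formulas and using the Leibniz rule for $\partial/\partial x_i$ acting on the product $x_j f(v)$, which produces a $\delta_{ij}$ term (from differentiating the coordinate $x_j$) together with a term $x_j \,\partial f/\partial x_i$ that should cancel against the corresponding piece of $DC$.

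The key steps, in order: (1) record that $C$ and $D$ are odd operators on $\mathrm{Sch}(V)$, so $CD+DC$ is the graded commutator; (2) apply both $CD$ and $DC$ to $f \in \mathrm{Sch}(V)$ using the definitions of $C$ and $D$, being careful with the sign $(-1)^{\deg}$ hidden in each $\widehat{e_i}$ and with the Leibniz rule $\partial_i(x_j f) = \delta_{ij} f + x_j \partial_i f$; (3) observe that the two double-sum terms involving $x_j \partial_i f$ (one from $CD$, one from $DC$) cancel because $e_i \widehat{e_j} = \widehat{e_j} e_i$ as operators on $\mathrm{Cliff}(V)$ while the grading signs arrange an overall minus; (4) collect the surviving $\delta_{ij}$ terms into $\sum_i \widehat{e_i} e_i = N$; (5) conclude $CD + DC = N$ and hence $B^2 = C^2 + D^2 + N$.

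The main obstacle is the bookkeeping of signs in step (3): the operators $e_i$ and $\widehat{e_j}$ carry the $(-1)^{\deg(x)}$ factor from the definition of $\widehat{\,\cdot\,}$, and the left-multiplication $e_i$ and right-multiplication $\widehat{e_j}$ must be shown to commute (not anticommute) as operators, so that the unwanted derivative terms genuinely cancel rather than reinforce. Concretely, one checks $e_i \widehat{e_j}(x) = e_i (-1)^{\deg x} x e_j = (-1)^{\deg x} e_i x e_j$ and $\widehat{e_j} e_i (x) = (-1)^{\deg(e_i x)} e_i x e_j = (-1)^{1+\deg x} e_i x e_j$, so in fact $e_i \widehat{e_j} = -\widehat{e_j} e_i$; combined with the opposite-order differentiation this still yields the claimed cancellation, but the sign must be tracked exactly. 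Everything else — that $C^2$ and $D^2$ survive unchanged, and that $N$ is the diagonal remainder — is immediate from the definitions, so once the cross-term computation is pinned down the proposition follows. I expect this to be, as the paper says, an easy calculation, with the only subtlety being the consistent use of the Koszul sign rule for $\widehat{e_i}$.
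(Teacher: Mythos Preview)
Your proposal is correct and is exactly the direct computation the paper has in mind: the paper gives no proof, merely noting ``The following is an easy calculation,'' and your expansion $B^2=(C+D)^2=C^2+D^2+(CD+DC)$ together with the verification that $e_i\widehat{e_j}+\widehat{e_j}e_i=0$ (so the $x_i\,\partial_j f$ terms cancel and only $\sum_i\widehat{e_i}e_i=N$ survives from the Leibniz rule) is the intended argument. One small clarification: once you have $e_i\widehat{e_j}=-\widehat{e_j}e_i$, the cancellation of the derivative double sums in $CD+DC$ is immediate from that anticommutation alone---there is no further ``opposite-order differentiation'' sign to invoke.
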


The following two results have identical proof to the results stated in~\cite{GH04} so for proofs see Proposition~1.16 and Corollary~1.1 of that paper.
\begin{proposition}\label{eigenvalues}
For a finite dimension real vector space $V$ of dimension $n$, consider the operator $B=C+D$. Then there exists an orthornormal basis in $\text{Sch}(V)$ for $\mathcal{H}(V)$ consisting of the eigenvectors for $B^2$ such that 
\begin{enumerate}
\item the eigenvalues are nonnegative integers and each eigenvalue occurs with finite multiplicity, 
\item the eigenvalue 0 occurs only once and its corresponding eigenfunction is $e^{-\frac{1}{2} ||v||^2}$.
\end{enumerate}
\end{proposition}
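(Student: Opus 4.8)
The plan is to diagonalise $B^2$ explicitly by separating variables, exactly as in the complex case treated in \cite{GH04}; every step below is formally identical to the argument for \cite{GH04}, Proposition~1.16, so one could also simply quote that reference.

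First I would push the Clifford-algebra computations behind Proposition~\ref{B^2equals} one step further. Writing $C=\sum_i x_i e_i$ and $D=\sum_i \widehat{e_i}\,\partial/\partial x_i$ and using the relations $e_ie_j+e_je_i=2\delta_{ij}$ in $\text{Cliff}(V)$, a direct computation gives $C^2=M_{\|v\|^2}$, multiplication by the function $v\mapsto\|v\|^2$; using $\widehat{e_i}\widehat{e_j}+\widehat{e_j}\widehat{e_i}=-2\delta_{ij}$ it gives $D^2=-\Delta$, the non-negative Laplacian $-\sum_i\partial^2/\partial x_i^2$; and evaluating $N=\sum_i\widehat{e_i}e_i$ on the monomial basis $\{e_S : S\subseteq\{1,\dots,n\}\}$ of $\text{Cliff}(V)$ gives $Ne_S=(2|S|-n)e_S$. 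In particular $N$ is bounded and self-adjoint with integer eigenvalues $-n,-n+2,\dots,n$, the least one $-n$ being attained only on the line spanned by $1\in\text{Cliff}(V)$. Combined with Proposition~\ref{B^2equals}, this shows that on $\text{Sch}(V)$
\[B^2=H+N,\qquad H:=-\Delta+\|v\|^2 .\]

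Next I would invoke the classical spectral theory of the harmonic oscillator $H$ on $L^2(V)$: $H$ is essentially self-adjoint on the Schwartz space $\mathcal{S}(V)$, and the tensor products of one-dimensional Hermite functions, $h_k(v)=\prod_{i=1}^n h_{k_i}(x_i)$ for $k\in\mathbb{N}^n$, form an orthonormal basis of $L^2(V)$ lying inside $\mathcal{S}(V)$, with $Hh_k=(n+2|k|)h_k$ and with the bottom eigenvalue $n$ simple and realised by (a scalar multiple of) the Gaussian $v\mapsto e^{-\frac12\|v\|^2}$. Identifying $\mathcal{H}(V)$ with the Hilbert-space tensor product $L^2(V)\otimes\text{Cliff}(V)$ (where $\text{Cliff}(V)$ carries the inner product for which the $e_S$ are orthonormal) and $\text{Sch}(V)$ with $\mathcal{S}(V)\otimes\text{Cliff}(V)$, the summand $H$ of $B^2$ acts on the first factor only and $N$ on the second only, so the two summands commute. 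Hence $\{\,h_k\otimes e_S : k\in\mathbb{N}^n,\ S\subseteq\{1,\dots,n\}\,\}$ is an orthonormal basis of $\mathcal{H}(V)$, contained in $\text{Sch}(V)$, consisting of eigenvectors of $B^2$, with $B^2(h_k\otimes e_S)=(2|k|+2|S|)(h_k\otimes e_S)$. Since $H$ is essentially self-adjoint on $\mathcal{S}(V)$ and $N$ is bounded, $B^2$ is essentially self-adjoint on $\text{Sch}(V)$, so this is a genuine eigenbasis for its closure.

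Finally I would read off the conclusions. The eigenvalues are the non-negative integers $2m$ with $m=|k|+|S|$; for a fixed $m$ there are only finitely many $k\in\mathbb{N}^n$ with $|k|\le m$ and finitely many subsets $S$, so every eigenvalue has finite multiplicity, giving $(1)$. And $2(|k|+|S|)=0$ forces $|k|=0$ and $|S|=0$, hence $h_k$ is the Gaussian and $e_S=1$, so the $0$-eigenspace is exactly the line spanned by $v\mapsto e^{-\frac12\|v\|^2}$, giving $(2)$. I do not expect any real obstacle here: the Clifford computations in the second paragraph are finite and routine, and the one point needing a little care — essential self-adjointness and completeness of the proposed eigenbasis — is handled automatically by the tensor splitting together with the standard fact that $\mathcal{S}(V)$ is a core for $H$.
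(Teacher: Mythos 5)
Your proof is correct and is essentially the argument the paper is invoking: the paper does not give a proof but simply cites \cite{GH04}, Proposition~1.16, and the harmonic-oscillator decomposition $B^2 = (-\Delta + \|v\|^2) + N$ with Hermite eigenfunctions and the number-operator computation $N e_S = (2|S|-n)e_S$ is exactly the content of that reference. The Clifford-algebra identities you use (including $\widehat{e_i}\widehat{e_j}+\widehat{e_j}\widehat{e_i}=-2\delta_{ij}$, which holds because $\widehat{e_i}$ is odd right multiplication by $e_i$) carry over verbatim to the real case, so the appeal to \cite{GH04} is justified and your reconstruction matches it.
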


\begin{corollary}\label{rank1}
For a finite dimensional real vector space $V$. Then the Bott operator $B$ 
\begin{enumerate}
\item is essentially self-adjoint,
\item has compact resolvent,
\item has one-dimensional kernel generated by the function $e^{-\frac{1}{2} ||v||^2}$.
\end{enumerate}
\end{corollary}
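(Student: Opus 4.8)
The plan is to deduce all three assertions directly from Proposition~\ref{eigenvalues}, exactly as in the corresponding argument of~\cite{GH04}; the only point to check is that nothing there used a complex structure.

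First I would record that $B = C + D$ is a symmetric operator on the dense domain $\text{Sch}(V)$: the Dirac operator $D$ is formally self-adjoint by Lemma~\ref{compactoperator}, and $C$ is symmetric by the same integration-by-parts computation together with the facts that left multiplication by a basis vector $e_i$ is self-adjoint on $\text{Cliff}(V)$ (since $e_i^2 = 1$) and that the coordinates $x_i$ are real-valued. Next I would promote the eigenbasis of $B^2$ furnished by Proposition~\ref{eigenvalues} to an eigenbasis of $B$ itself. Since $B$ commutes with $B^2$ on $\text{Sch}(V)$, it preserves each eigenspace $E_\lambda$ of $B^2$; by Proposition~\ref{eigenvalues}(1) every $E_\lambda$ is finite-dimensional and contained in $\text{Sch}(V)$, so the restriction $B|_{E_\lambda}$ is a symmetric operator on a finite-dimensional inner-product space and hence has an orthonormal eigenbasis with real eigenvalues $\mu$, necessarily satisfying $\mu^2 = \lambda$. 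Collecting these over all $\lambda$ yields an orthonormal basis of $\mathcal{H}(V)$ consisting of eigenvectors of $B$ lying in $\text{Sch}(V)$, with real eigenvalues $\mu_n$ whose squares are the nonnegative integers, each occurring with finite multiplicity; in particular $|\mu_n| \to \infty$.

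With this basis in hand the three statements follow quickly. A symmetric operator admitting an orthonormal basis of eigenvectors in its domain is essentially self-adjoint, which gives (1). For (2), the self-adjoint closure $\overline{B}$ is diagonalised by this basis, so $(\overline{B} \pm i)^{-1}$ is the diagonal operator with entries $(\mu_n \pm i)^{-1} \to 0$, hence compact. For (3), any $x \in \ker \overline{B}$ satisfies $\overline{B}^2 x = 0$, so $x$ lies in the kernel of the closure of $B^2$, which by Proposition~\ref{eigenvalues}(2) is the one-dimensional span of $g(v) = e^{-\frac{1}{2}||v||^2}$; conversely a direct computation, using $e_i(1) = \widehat{e_i}(1) = e_i$ in $\text{Cliff}(V)$, gives $Cg(v) = \sum_i x_i e_i\, g(v)$ and $Dg(v) = -\sum_i x_i e_i\, g(v)$, so $Bg = 0$ and $\ker \overline{B} = \mathbb{R}g$.

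The main obstacle is the passage from a $B^2$-eigenbasis to a $B$-eigenbasis, together with invoking the general principle that a symmetric operator with an orthonormal eigenbasis in its domain is essentially self-adjoint; once these are set up, the remaining verifications are bookkeeping. Since this is precisely what is carried out in~\cite{GH04} for the complex case and the real setting is formally identical, I would give the argument by reference, remarking explicitly that no complex structure is used anywhere in it.
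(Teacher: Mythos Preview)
Your proposal is correct and matches the paper's approach exactly: the paper gives no independent argument for this corollary but simply refers to Proposition~1.16 and Corollary~1.17 of~\cite{GH04}, remarking that the complex proof carries over unchanged, which is precisely what you do (with the details spelled out). One cosmetic point for the real setting: the resolvent $(B \pm i)^{-1}$ is not literally available on a real Hilbert space, so phrase part~(2) via $(1+B^2)^{-1}$ being compact (or via complexification) instead.
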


The following follows from~\cite{HRT} page~114-115, and it is stated in~\cite{GH04} as Proposition~1.6. 
\begin{proposition}[Mehler's formula] \label{Mehler}
For the Clifford operator $C$ and the Dirac operator $D$ defined above we have the following identities for $s > 0$
\[e^{-s(C^2 +D^2)} = e^{-\frac{1}{2} s_1 C^2} e^{-s_2 D^2} e^{-\frac{1}{2} s_1 C^2} ~~ \text{and} ~~ e^{-s(C^2 +D^2)} = e^{-\frac{1}{2} s_1 D^2} e^{-s_2 C^2} e^{-\frac{1}{2} s_1 D^2}\]
with \[s_1 = \frac{\text{cosh}(2s) -1}{\text{sinh}(2s)} ~ \text{and} ~  s_2 = \frac{\text{sinh}(2s)}{2}.\]
\end{proposition}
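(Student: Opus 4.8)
The plan is to derive both identities from the classical Mehler formula for the scalar harmonic oscillator together with the fact that, although $C$ and $D$ do not commute, the pair $(C^2, D^2)$ generates a solvable Lie algebra under commutation, so the one-parameter semigroup $e^{-s(C^2+D^2)}$ admits a finite Zassenhaus-type factorization of exactly the shape asserted. Concretely, I would first record that $C^2 = M_{\|v\|^2}$ acts as multiplication by $\|v\|^2$ and $D^2 = -\Delta$ is (a Clifford-coefficient version of) the Laplacian, so that on the nose $[C^2, D^2]$ is a first-order operator and the algebra spanned by $C^2$, $D^2$, $[C^2,D^2]$ and the identity is finite-dimensional (this is the Clifford-valued avatar of the $\mathfrak{sl}_2$-triple underlying the harmonic oscillator). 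I would then reduce to the rank-one case $V = \mathbb{R}$ by observing that $C^2 + D^2$ decomposes as a sum of commuting one-dimensional operators, one for each coordinate $x_i$, and the Clifford coefficients only enter through bounded multiplication operators that commute with the relevant scalar differential operators; hence it suffices to verify the factorization for the single-variable operator $-\tfrac{d^2}{dx^2} + x^2$.

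For the one-variable case I would appeal directly to the reference \cite{HRT}, pages 114--115: the integral kernel of $e^{-s(-d^2/dx^2 + x^2)}$ is the classical Mehler kernel, and one checks by an explicit Gaussian-integral computation that composing the three kernels $e^{-\frac12 s_1 x^2}$, $e^{-s_2(-d^2/dx^2)}$, $e^{-\frac12 s_1 x^2}$ (the middle one being the ordinary heat kernel, a Gaussian convolution) reproduces the Mehler kernel, provided $s_1$ and $s_2$ satisfy the stated hyperbolic identities. The values $s_1 = (\cosh 2s - 1)/\sinh 2s$ and $s_2 = \sinh(2s)/2$ are precisely what make the two Gaussian widths and the cross term match; this is a routine but slightly lengthy computation with the formula $\int e^{-ax^2 + bx}\,dx = \sqrt{\pi/a}\, e^{b^2/4a}$, and I would not reproduce it here but cite it. The second identity is obtained from the first by the symmetry exchanging the roles of $C$ and $D$: conjugating by the (partial) Fourier transform $\mathcal{F}$ interchanges multiplication by $\|v\|^2$ with $-\Delta$, i.e. $\mathcal{F} C^2 \mathcal{F}^{-1} = D^2$ and $\mathcal{F} D^2 \mathcal{F}^{-1} = C^2$ up to the bounded Clifford factors, and it fixes $C^2 + D^2$; applying $\mathcal{F}(\cdot)\mathcal{F}^{-1}$ to the first factorization yields the second with the same $s_1, s_2$.

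The main obstacle is bookkeeping the Clifford coefficients: in the definitions above, $C$ involves the operators $e_i$ (left Clifford multiplication) while $D$ involves $\hat{e_i}$ (the graded right multiplication), and although $C^2$ and $D^2$ individually collapse to scalar-coefficient operators ($M_{\|v\|^2}$ and $-\Delta$ tensored with the identity on $\mathrm{Cliff}(V)$), one must check that the bounded operators appearing do not obstruct the semigroup manipulations or the Fourier-transform symmetry argument — i.e. that $e^{-s_2 D^2}$ really is the scalar heat semigroup acting coefficientwise and commutes appropriately with $M_{e^{-\frac12 s_1\|v\|^2}}$ after Fourier transform. Since $C^2$ and $D^2$ are genuinely scalar once the Clifford structure is stripped, this reduces to the classical statement, and the only real content beyond \cite{HRT} is verifying that the passage $\text{Cliff}(V) \rightsquigarrow \mathbb{R}$ is harmless; I would state this reduction explicitly and then invoke the classical Mehler computation. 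Hence the proof is: reduce to scalars and to rank one, quote \cite{HRT} for the kernel identity and the values of $s_1, s_2$, and derive the second identity by Fourier duality.
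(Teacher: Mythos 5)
Your proof is correct and takes essentially the same route as the paper, which itself gives no argument but simply cites \cite{HRT}, pages 114--115 (and \cite{GH04}, Proposition~1.6), implicitly relying on the fact that $C^2 = M_{\lVert v\rVert^2}$ and $D^2 = -\Delta$ are scalar operators so the Clifford structure is inert; your reduction to the rank-one scalar case and derivation of the second identity by conjugating with the Fourier transform are exactly the details the paper leaves to the reference. One small slip worth noting, though it plays no role in your argument: the Lie algebra spanned by $C^2$, $D^2$ and their commutator is an $\mathfrak{sl}_2$-triple and hence simple, not solvable, so the factorization is a Harish-Chandra/metaplectic decomposition rather than a Zassenhaus one.
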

\qed 

Then as in~\cite{GH04}, Lemma~1.11, we have the asymptotic conditions:

\begin{lemma}
For an unbounded operator $X$ we have the following conditions: 

\[\lim_{t \to \infty}||e^{-\frac{1}{2} s_1 X^2} - e^{-\frac{1}{2} t^{-2} X^2} || =0,\]
\[\lim_{t \to \infty}||e^{-\frac{1}{2} s_2 X^2} - e^{-\frac{1}{2} t^{-2} X^2} || =0,\]
and 
\[\lim_{t \to \infty}||t^{-1} Xe^{-\frac{1}{2} s_1 X^2} - t^{-1}Xe^{-\frac{1}{2} t^{-2} X^2} || =0,\]
\[\lim_{t \to \infty}||t^{-1} Xe^{-\frac{1}{2} s_2 X^2} - t^{-1}Xe^{-\frac{1}{2} t^{-2} X^2} || =0,\]
where  \[s_1 = \frac{\text{cosh}(2t^{-2}) -1}{\text{sinh}(2t^{-2})} ~ \text{and} ~  s_2 = \frac{\text{sinh}(2t^{-2})}{2}.\]
\end{lemma}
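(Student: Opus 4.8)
The plan is to reduce all four estimates, via the spectral theorem, to two elementary scalar facts. Throughout we assume $X$ is self-adjoint (this is the relevant case, $X$ being $C$, $D$ or $B$), so that $X^2\ge 0$ and the Borel functional calculus applies. Put $Y=t^{-1}X$, which is again self-adjoint with $Y^2=t^{-2}X^2$, and set $c_j(t)=t^2 s_j$ for $j=1,2$, so that $e^{-\frac12 s_j X^2}=e^{-\frac12 c_j(t)Y^2}$ and $t^{-1}Xe^{-\frac12 s_j X^2}=Ye^{-\frac12 c_j(t)Y^2}$, while the operators $e^{-\frac12 t^{-2}X^2}$ and $t^{-1}Xe^{-\frac12 t^{-2}X^2}$ are these same expressions at $c=1$. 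Applying the functional calculus for $Y$ gives
\[||e^{-\frac12 s_j X^2}-e^{-\frac12 t^{-2}X^2}||\le \Phi(c_j(t)),\qquad ||t^{-1}Xe^{-\frac12 s_j X^2}-t^{-1}Xe^{-\frac12 t^{-2}X^2}||\le \Psi(c_j(t)),\]
where $\Phi(c)=\sup_{\lambda\in\mathbb{R}}|e^{-\frac12 c\lambda^2}-e^{-\frac12\lambda^2}|$ and $\Psi(c)=\sup_{\lambda\in\mathbb{R}}|\lambda|\,|e^{-\frac12 c\lambda^2}-e^{-\frac12\lambda^2}|$. Since these bounds are independent of $X$, it suffices to show that $c_j(t)\to 1$ as $t\to\infty$ and that $\Phi(c),\Psi(c)\to 0$ as $c\to 1$.

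For the first I would use the identity $\dfrac{\cosh(2u)-1}{\sinh(2u)}=\tanh(u)$, which shows $c_1(t)=t^2\tanh(t^{-2})$, together with $c_2(t)=\dfrac{\sinh(2t^{-2})}{2t^{-2}}$; both tend to $1$ as $t\to\infty$ since $\tanh(u)/u\to1$ and $\sinh(2u)/(2u)\to1$ as $u\to 0$.

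For the second, by symmetry it is enough to treat $c\le 1$. Writing $\mu=\lambda^2$ and using $0\le 1-e^{-x}\le x$ for $x\ge 0$,
\[0\le e^{-\frac12 c\mu}-e^{-\frac12\mu}=e^{-\frac12 c\mu}\bigl(1-e^{-\frac12(1-c)\mu}\bigr)\le \tfrac12(1-c)\,\mu\,e^{-\frac12 c\mu},\]
whence $\Phi(c)\le\tfrac12(1-c)\sup_{\mu\ge0}\mu e^{-\frac12 c\mu}$ and $\Psi(c)\le\tfrac12(1-c)\sup_{\mu\ge0}\mu^{3/2}e^{-\frac12 c\mu}$. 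The two suprema are attained at $\mu=2/c$ and $\mu=3/c$ and remain bounded as $c\to1$, so $\Phi(c),\Psi(c)=O(1-c)\to0$. Together with $c_j(t)\to1$ this gives all four limits.

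Everything here is routine; the one point worth flagging — and the reason for phrasing the argument as above — is that the operator $X$, hence $Y$, depends on $t$, so the estimates must be uniform in $X$. The spectral reduction achieves exactly this, turning each operator-norm difference into the supremum of a scalar function of the ratio $c$ alone. The only other wrinkle is the unbounded weight $|\lambda|$ in $\Psi$, which is dominated by Gaussian decay and accounts for the extra half-power $\mu^{1/2}$ in the bound for $\Psi$ relative to $\Phi$.
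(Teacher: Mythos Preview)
Your argument is correct. The paper does not actually prove this lemma; it simply cites \cite{GH04}, Lemma~1.11, and places a \qed. Your spectral-theoretic reduction to the scalar quantities $\Phi(c)$ and $\Psi(c)$, together with the observation that $c_j(t)=t^2 s_j\to 1$, is exactly the standard route and is what the cited reference does as well, so there is nothing substantively different to compare.

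One small inaccuracy in your commentary: you write that ``the operator $X$, hence $Y$, depends on $t$'', but in the statement $X$ is fixed and only $Y=t^{-1}X$ varies with $t$. Your proof is unaffected --- the uniformity in $Y$ that you obtain is precisely what is needed --- but the remark as written is misleading. Also, ``by symmetry'' for the case $c>1$ is a slight abuse: the two cases are not literally symmetric, but the same inequality $1-e^{-x}\le x$ applied with the exponentials interchanged gives $\Phi(c),\Psi(c)=O(c-1)$ with the sup now taken of $\mu^k e^{-\mu/2}$, which is still bounded. Since $c_1(t)\le 1$ and $c_2(t)\ge 1$, both cases are in fact needed.
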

\qed

For a proof of the following result, see Lemma~1.12 of~\cite{GH04}.
\begin{lemma}\label{commutativegrading} 
Let $f,g \in C_0(\mathbb{R})$, then 
\[\lim_{t \to \infty} || [f(t^{-1}C), g(t^{-1}D)] || = 0. \]
\end{lemma}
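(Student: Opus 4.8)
The plan is to prove Lemma~\ref{commutativegrading} by reducing the claim about arbitrary $f,g \in C_0(\mathbb{R})$ to the generators of $\mathcal{S}$, and then to a single resolvent-type estimate. First I would observe that it suffices to check the asymptotic vanishing of $[f(t^{-1}C), g(t^{-1}D)]$ on a dense subalgebra: since $f \mapsto f(t^{-1}C)$ and $g \mapsto g(t^{-1}D)$ are $\ast$-homomorphisms from $C_0(\mathbb{R})$ to $\mathcal{B}(\mathcal{H}(V))$, the bilinear map $(f,g) \mapsto [f(t^{-1}C), g(t^{-1}D)]$ is norm-continuous in each variable with norm bounded by $2\|f\|\,\|g\|$ uniformly in $t$; so if the commutator vanishes asymptotically for $f,g$ in dense subsets it vanishes for all $f,g \in C_0(\mathbb{R})$. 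By Lemma~\ref{Agenerates} (and its analogue for $C_0(\mathbb{R})$ with the resolvent functions), I would take $f,g$ to range over a convenient generating family — for the self-adjoint operators $C$ and $D$ the natural choice is $f(x) = (x \pm i)^{-1}$, i.e. reduce to showing $\lim_{t\to\infty}\|[(t^{-1}C \pm i)^{-1}, (t^{-1}D \pm i)^{-1}]\| = 0$.

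Next I would expand the commutator of resolvents using the standard identity
\[
[(t^{-1}C \pm i)^{-1}, (t^{-1}D \pm i)^{-1}] = (t^{-1}C \pm i)^{-1}\,[t^{-1}D, t^{-1}C]\,(t^{-1}C \pm i)^{-1}(t^{-1}D \pm i)^{-1},
\]
valid on the relevant cores, after moving the resolvents past the commutator. The point is that $\|(t^{-1}C \pm i)^{-1}\| \le 1$ and similarly for $D$, so the whole expression is controlled by $\|[t^{-1}C, t^{-1}D]\| = t^{-2}\|[C,D]\|$ — provided the graded commutator $[C,D]$ is a bounded operator. So the real content is the computation that $[C,D]$ (the graded commutator) is bounded on $\mathcal{H}(V)$; in fact $CD + DC$ should be, up to the number operator $N$, exactly what appears when one expands $B^2 = (C+D)^2 = C^2 + (CD+DC) + D^2$ and compares with Proposition~\ref{B^2equals}, which gives $CD + DC = N$. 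Since $N = \sum_i \widehat{e_i} e_i$ is bounded (as noted in the excerpt), $[C,D]$ is bounded, and the estimate closes: the commutator of resolvents is $O(t^{-2})$.

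The main obstacle I expect is making the manipulation of unbounded self-adjoint operators rigorous: $C$ and $D$ are only essentially self-adjoint on $\text{Sch}(V)$, so pushing resolvents through the commutator $[t^{-1}D, t^{-1}C]$ requires knowing that $\text{Sch}(V)$ is a common core preserved appropriately and that the formal identity above holds on that core before extending by density. One must check that $(t^{-1}C \pm i)^{-1}$ maps into the domain of $D$ (equivalently that $D(t^{-1}C\pm i)^{-1}$ extends boundedly), which again follows from boundedness of $[C,D]$ together with the resolvent identity, but the bookkeeping needs care. Once this domain issue is handled, everything else is a routine norm estimate. Since the excerpt explicitly says the proof is identical to Lemma~1.12 of~\cite{GH04}, I would in practice simply cite that argument, noting that all operators and estimates involved make equal sense over $\mathbb{R}$ for finite-dimensional Euclidean $V$, and that the key input — boundedness of the graded commutator $[C,D] = CD + DC = N$ — is provided by Proposition~\ref{B^2equals}.
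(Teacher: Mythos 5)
The paper itself gives no argument here: it simply cites Lemma~1.12 of~\cite{GH04}. Your proposal attempts a genuinely self-contained, resolvent-based argument, which is a different route, but as written it contains a real gap.

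The gap is in the step you flag as the ``real content'' of the proof. You apply the standard resolvent-commutator identity, which for the \emph{ordinary} commutator $[A,B]=AB-BA$ reads (with both resolvent factors appearing twice, not once as in your display)
\[
[A^{-1},B^{-1}] = B^{-1}A^{-1}\,[A,B]\,A^{-1}B^{-1},
\]
and then claim the resulting quantity is $O(t^{-2})$ because ``the graded commutator $[C,D]=CD+DC=N$ is bounded.'' But these are two different operators. Since $C$ and $D$ are both odd, the graded commutator is indeed the anticommutator $CD+DC$, and Proposition~\ref{B^2equals} gives $CD+DC=N$, bounded. The resolvent identity, however, produces the \emph{ungraded} commutator $CD-DC$. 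A direct computation in the notation of the paper, using $e_i\hat e_j=-\hat e_j e_i$, gives
\[
CD-DC = -N - 2\sum_{i,j} x_i\,\hat e_j e_i\,\partial_j,
\]
which involves first-order differential operators with polynomial coefficients and is unbounded on $\mathcal{H}(V)$. So the estimate does not close as stated; boundedness of $N$ is simply not the hypothesis that the resolvent identity asks for. A secondary but related issue: $(x\pm i)^{-1}$ is complex-valued, so it is not an element of $\mathcal{S}=C_0(\mathbb{R};\mathbb{R})$ in this real setting; one should use real generators such as $(1+x^2)^{-1}$ and $x(1+x^2)^{-1}$, and the graded commutator then becomes an ordinary commutator or an anticommutator depending on parities, with no uniform ``anticommutator of resolvents'' identity.

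Your outline can probably be repaired, but only with extra work that is currently missing: for example, for the even generator $(1+x^2)^{-1}$ the relevant commutator is $[C^2,D^2]=-4\sum_i x_i\partial_i-2n$, also unbounded, but each unbounded factor $x_i$ or $\partial_i$ can be absorbed into the adjacent resolvent $(1+t^{-2}C^2)^{-1}$ or $(1+t^{-2}D^2)^{-1}$ at a cost of $O(t)$, leaving $t^{-4}\cdot t\cdot t = O(t^{-2})$. That analysis (and its analogues for the odd generator and the mixed cases) is exactly the content that would be needed, and it is not what the cited boundedness of $N$ supplies. By contrast, the proof the paper relies on in~\cite{GH04} reduces to the Gaussian generators $u(x)=e^{-x^2}$, $v(x)=xe^{-x^2}$ and exploits the explicit heat semigroup identities (Mehler's formula, Proposition~\ref{Mehler}) for $C^2$, $D^2$, $C^2+D^2$, rather than resolvent estimates.
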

\qed

We can use the proof in~\cite{GH04} of Theorem~1.17 and the above statements to obtain.
\begin{proposition}\label{asyequivofCandDwithu}
For operators $C$ and $D$ defined before, 
\[ e^{-t^{-2} B^2} \sim_{asy} e^{-t^{-2} C^2} e^{-t^{-2} D^2}.\]
\end{proposition}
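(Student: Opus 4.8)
The plan is to reduce the asymptotic equivalence $e^{-t^{-2}B^2}\sim_{asy} e^{-t^{-2}C^2}e^{-t^{-2}D^2}$ to the Mehler-type factorisation of Proposition~\ref{Mehler} together with the asymptotic conditions on the one-parameter semigroups recorded just above. First I would use Proposition~\ref{B^2equals}, namely $B^2=C^2+D^2+N$, to separate the effect of the bounded number operator $N$ from the unbounded part $C^2+D^2$. Since $N$ is a bounded operator (it is a finite sum $\sum_i \widehat{e_i}e_i$ acting fibrewise on $\mathrm{Cliff}(V)$), the discrepancy between $e^{-t^{-2}B^2}$ and $e^{-t^{-2}(C^2+D^2)}$ is controlled by $\|e^{-t^{-2}N}-1\|$, which tends to $0$ as $t\to\infty$; a Duhamel/commutator estimate then shows $e^{-t^{-2}B^2}\sim_{asy} e^{-t^{-2}(C^2+D^2)}$, reducing us to comparing $e^{-t^{-2}(C^2+D^2)}$ with $e^{-t^{-2}C^2}e^{-t^{-2}D^2}$.

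Next I would apply Mehler's formula (Proposition~\ref{Mehler}) with $s=t^{-2}$, giving
\[
e^{-t^{-2}(C^2+D^2)} = e^{-\frac12 s_1 C^2}\,e^{-s_2 D^2}\,e^{-\frac12 s_1 C^2},
\]
with $s_1=\dfrac{\cosh(2t^{-2})-1}{\sinh(2t^{-2})}$ and $s_2=\dfrac{\sinh(2t^{-2})}{2}$. Then I would invoke the asymptotic-condition lemma stated above (the one with the four limits): it gives $\|e^{-\frac12 s_1 X^2}-e^{-\frac12 t^{-2}X^2}\|\to 0$ and the analogous statement for $s_2$, applied to $X=C$ and $X=D$. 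Substituting these approximations into the Mehler factorisation and using that asymptotic equivalence is preserved under products of uniformly bounded families (each semigroup factor has norm $\le 1$), one obtains
\[
e^{-t^{-2}(C^2+D^2)} \sim_{asy} e^{-\frac12 t^{-2}C^2}\,e^{-t^{-2}D^2}\,e^{-\frac12 t^{-2}C^2}.
\]

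Finally I would collapse the symmetric three-factor product into the two-factor product $e^{-t^{-2}C^2}e^{-t^{-2}D^2}$. For this I would use Lemma~\ref{commutativegrading}, which says $\|[f(t^{-1}C),g(t^{-1}D)]\|\to 0$: applied to the Gaussian functions $f(x)=g(x)=e^{-\frac12 x^2}$ (so that $f(t^{-1}C)=e^{-\frac12 t^{-2}C^2}$, and similarly for $D$), it allows the middle factor $e^{-t^{-2}D^2}$ to be commuted past $e^{-\frac12 t^{-2}C^2}$ up to an asymptotically vanishing error, after which the two half-powers of $e^{-\frac12 t^{-2}C^2}$ recombine into $e^{-t^{-2}C^2}$ by the semigroup law. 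Chaining the three asymptotic equivalences then yields the claim. I expect the main obstacle to be bookkeeping the error terms: one must check at each stage that the family being perturbed is uniformly bounded and that composing a bounded family with an asymptotically small perturbation keeps the error asymptotically small, but this is exactly the argument carried out in the proof of Theorem~1.17 of~\cite{GH04}, which transfers verbatim to the real finite-dimensional Euclidean setting since none of the estimates used (boundedness of $N$, Mehler's formula, the semigroup asymptotics, the Clifford–Dirac commutator estimate) depends on the ground field being $\mathbb{C}$.
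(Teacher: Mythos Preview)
Your proposal is correct and follows essentially the same route as the paper, which simply defers to the proof of Theorem~1.17 in~\cite{GH04}: separate off the bounded number operator $N$ from $B^2=C^2+D^2+N$, apply Mehler's factorisation with $s=t^{-2}$, use the semigroup asymptotics to replace $s_1,s_2$ by $t^{-2}$, and then invoke the asymptotic commutativity of Lemma~\ref{commutativegrading} to collapse the symmetric three-factor product to the desired two-factor one. One minor simplification you may have missed: since $N$ acts purely on the $\mathrm{Cliff}(V)$ factor while $C^2$ and $D^2$ are scalar in that factor, $N$ actually commutes with $C^2+D^2$, so $e^{-t^{-2}B^2}=e^{-t^{-2}(C^2+D^2)}e^{-t^{-2}N}$ exactly and no Duhamel estimate is needed for the first step.
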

\qed

The following is similar.

\begin{proposition}\label{asyequivofCandDwithv}
For the operators $C$ and $D$, 
\[ t^{-1} B e^{-t^{-2} B^2} \sim_{asy} t^{-1}(C+D) e^{-t^{-2} C^2} e^{-t^{-2} D^2}.\]
\end{proposition}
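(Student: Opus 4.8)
The plan is to mimic the argument for Proposition~\ref{asyequivofCandDwithu}, but now applied to the odd generator $v(x) = xe^{-x^2}$ of $\mathcal{S}$ rather than the even generator $u(x) = e^{-x^2}$. The point is that Proposition~\ref{asyequivofCandDwithu} already records $e^{-t^{-2}B^2} \sim_{asy} e^{-t^{-2}C^2}e^{-t^{-2}D^2}$; the new content here is to track the extra factor $t^{-1}B = t^{-1}(C+D)$, and to show that the cross-commutator terms produced when we move this factor past the heat-kernel factors vanish asymptotically.

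First I would write $t^{-1}Be^{-t^{-2}B^2} = t^{-1}(C+D)e^{-t^{-2}B^2}$ and apply Mehler's formula (Proposition~\ref{Mehler}) together with the asymptotic conditions from the lemma following it to replace the $s_1, s_2$ coefficients by $t^{-2}$, obtaining, up to asymptotic equivalence, that $e^{-t^{-2}B^2}$ factors as $e^{-\frac12 t^{-2}C^2}e^{-t^{-2}D^2}e^{-\frac12 t^{-2}C^2}$ (and symmetrically). The asymptotic conditions also cover the decorated versions $t^{-1}Xe^{-\frac12 s_i X^2} \sim t^{-1}Xe^{-\frac12 t^{-2}X^2}$, which is exactly what is needed to carry the leading factor along. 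Next I would use Lemma~\ref{commutativegrading}, which gives $\lim_{t\to\infty}\|[f(t^{-1}C), g(t^{-1}D)]\| = 0$ for $f,g \in C_0(\mathbb{R})$, to commute the $C$-type factors past the $D$-type factors and collapse the three-factor product $e^{-\frac12 t^{-2}C^2}e^{-t^{-2}D^2}e^{-\frac12 t^{-2}C^2}$ into $e^{-t^{-2}C^2}e^{-t^{-2}D^2}$; the same lemma, applied with one of the functions taken to be $x\mapsto xe^{-\frac12 x^2}$, handles moving the $t^{-1}C$ and $t^{-1}D$ factors into position. Assembling these asymptotic equalities via the triangle inequality for the asymptotic-equivalence relation yields $t^{-1}Be^{-t^{-2}B^2} \sim_{asy} t^{-1}(C+D)e^{-t^{-2}C^2}e^{-t^{-2}D^2}$.

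The main obstacle I anticipate is the uniform boundedness bookkeeping: unlike $e^{-t^{-2}B^2}$, the operator $t^{-1}Be^{-t^{-2}B^2}$ involves the unbounded operator $B$, so one must check that $t^{-1}Be^{-t^{-2}B^2}$ is a uniformly bounded family (it is: $x \mapsto t^{-1}x e^{-t^{-2}x^2}$ has sup-norm $O(1)$ uniformly in $t$, by the substitution $y = t^{-1}x$) before the commutator estimates can be fed in, and similarly one must know that each intermediate factor such as $t^{-1}Ce^{-\frac12 t^{-2}C^2}$ is uniformly bounded so that the asymptotic errors genuinely compose. Once this is in place the argument is essentially formal, following Theorem~1.17 of~\cite{GH04} verbatim with $v$ in place of $u$, and the real-vector-space setting introduces nothing new since all the ingredients (Mehler's formula, Proposition~\ref{eigenvalues}, Corollary~\ref{rank1}, Lemma~\ref{commutativegrading}) have already been established over $\mathbb{R}$.
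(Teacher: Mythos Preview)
Your proposal is correct and matches the paper's approach exactly: the paper offers no proof beyond the remark ``The following is similar'' and a \qed, deferring (as with Proposition~\ref{asyequivofCandDwithu}) to Theorem~1.17 of~\cite{GH04}, and what you have written is precisely that argument spelled out for the odd generator $v$. The only imprecision is that Mehler's formula (Proposition~\ref{Mehler}) is stated for $e^{-s(C^2+D^2)}$ rather than $e^{-sB^2}$, so the bounded correction $B^2-(C^2+D^2)=N$ from Proposition~\ref{B^2equals} must be absorbed separately before the factorisation; this step is part of the \cite{GH04} argument you cite, so no genuine gap arises.
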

\qed

\begin{thm}\label{composition equivalent to gamma}
The composition of $\alpha_t$ and $\beta$ given by :
\[ \xymatrixcolsep{3pc}\xymatrix{\mathcal{S} \ar[r]^-{\Delta} & \mathcal{S} \widehat{\otimes} \mathcal{S} \ar[r]^-{\text{id} \widehat{\otimes} \beta} & \mathcal{S} \widehat{\otimes} C_0(V,\text{Cliff}(V)) \ar@{.>}[r]^-{\alpha_t} & \mathcal{K}(\mathcal{H}(V),} \]
is asymptotically equivalent to the asymptotic morphism $\gamma_t \colon \mathcal{S} \dashrightarrow \mathcal{K}(H)$ defined by
\[\gamma_t(f) = f(t^{-1} B),\]
for all $t \geq 1$.
\end{thm}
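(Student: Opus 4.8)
The plan is to unwind the composition $\alpha_t \circ (\mathrm{id}\widehat\otimes\beta)\circ\Delta$ on the generators $u(x)=e^{-x^2}$ and $v(x)=xe^{-x^2}$ of $\mathcal S$ (legitimate by Lemma~\ref{Agenerates}), compute the resulting operators explicitly, and recognise them — up to asymptotic equivalence — as $u(t^{-1}B)$ and $v(t^{-1}B)$. First I would trace through $\Delta$: since $\Delta(u)=u\widehat\otimes u$ and $\Delta(v)=u\widehat\otimes v+v\widehat\otimes u$, applying $\mathrm{id}\widehat\otimes\beta$ gives $u\widehat\otimes\beta(u)$ and $u\widehat\otimes\beta(v)+v\widehat\otimes\beta(u)$ in $\mathcal S\widehat\otimes C_0(V,\mathrm{Cliff}(V))$. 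Then $\alpha_t(f\widehat\otimes h)=f(t^{-1}D)M_{h_t}$ together with the earlier lemma identifying $M_{\beta(f)}=f(C)$ (so that $M_{(\beta(f))_t}$ corresponds to $f(t^{-1}C)$ after the rescaling $h_t(v)=h(t^{-1}v)$) yields that the composition sends
\[
u \;\longmapsto\; e^{-t^{-2}D^2}\,e^{-t^{-2}C^2},\qquad
v \;\longmapsto\; e^{-t^{-2}D^2}\,t^{-1}C\,e^{-t^{-2}C^2} \;+\; t^{-1}D\,e^{-t^{-2}D^2}\,e^{-t^{-2}C^2}.
\]
(Here I must be a little careful about the order of the Gaussian factors and about the sign conventions in the graded tensor product from Definition~\ref{graded tensor product}, but Lemma~\ref{commutativegrading} says $[f(t^{-1}C),g(t^{-1}D)]\to 0$, so the ordering is immaterial asymptotically.)

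Next I would compare these with $\gamma_t(u)=u(t^{-1}B)=e^{-t^{-2}B^2}$ and $\gamma_t(v)=v(t^{-1}B)=t^{-1}B\,e^{-t^{-2}B^2}=t^{-1}(C+D)e^{-t^{-2}B^2}$. This is exactly what Propositions~\ref{asyequivofCandDwithu} and~\ref{asyequivofCandDwithv} provide: $e^{-t^{-2}B^2}\sim_{asy}e^{-t^{-2}C^2}e^{-t^{-2}D^2}$ and $t^{-1}Be^{-t^{-2}B^2}\sim_{asy}t^{-1}(C+D)e^{-t^{-2}C^2}e^{-t^{-2}D^2}$. So on the two generators the composition agrees asymptotically with $\gamma_t$; since both are honest asymptotic morphisms and agree asymptotically on a generating set, they agree asymptotically on the dense subalgebra $A$ of Lemma~\ref{Agenerates}, and a standard $\varepsilon/3$ argument using uniform boundedness of $t\mapsto\varphi_t(f)$ extends the agreement to all of $\mathcal S$. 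I would also remark that $\gamma_t$ genuinely lands in $\mathcal K(\mathcal H(V))$: $B$ has compact resolvent by Corollary~\ref{rank1}, so $f(t^{-1}B)$ is compact for $f\in\mathcal S=C_0(\mathbb R)$.

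The main obstacle I anticipate is purely bookkeeping rather than conceptual: getting the Mehler-type rearrangement to line up with the precise operator that comes out of $\alpha_t\circ(\mathrm{id}\widehat\otimes\beta)\circ\Delta$. Concretely, the composition produces $C$- and $D$-Gaussians in a fixed order dictated by $\Delta$ and by the fact that $\alpha_t$ applies $f(t^{-1}D)$ on the left and $M_{h_t}$ (i.e. the $C$-functional calculus) on the right, whereas Propositions~\ref{asyequivofCandDwithu} and~\ref{asyequivofCandDwithv} are stated for a particular ordering; reconciling these requires invoking Lemma~\ref{commutativegrading} to commute $f(t^{-1}C)$ past $g(t^{-1}D)$ asymptotically, and checking that the graded signs in $\Delta(v)$ and in the product rule of Definition~\ref{graded tensor product} do not introduce an unwanted sign. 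Once the ordering and signs are settled, the rest is a direct appeal to the cited propositions, exactly as in the proof of Theorem~1.17 of~\cite{GH04}.
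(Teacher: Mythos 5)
Your proposal follows the same route as the paper: evaluate $\alpha_t\circ(\mathrm{id}\widehat\otimes\beta)\circ\Delta$ on the generators $u$ and $v$, arrive at $e^{-t^{-2}C^2}e^{-t^{-2}D^2}$ and $t^{-1}(C+D)e^{-t^{-2}C^2}e^{-t^{-2}D^2}$ respectively, and conclude via Propositions~\ref{asyequivofCandDwithu} and~\ref{asyequivofCandDwithv}. Your explicit appeals to Lemma~\ref{commutativegrading} for the ordering of the $C$- and $D$-factors and to the density argument for passing from the generators to all of $\mathcal S$ are sound and in fact make transparent two steps the paper carries out silently.
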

\begin{proof}
Here we imitate the proof of Guentner and Higson in~\cite{GH04} with additional details.
Since $S$ is generated by  $u$ and $v$ by Lemma~\ref{Agenerates} it suffices to check that 
\[\alpha(\text{id} \widehat{\otimes} \beta)(\Delta(f)) \sim_{asy} \gamma_t(f),\]
for $f=u$ and $f=v$. 

For $f=u$, 
\begin{align*}
\alpha(\text{id} \widehat{\otimes} \beta)(\Delta(u)) 
& = \alpha(\text{id} \widehat{\otimes} \beta)(u \widehat{\otimes} u) \\
& = \alpha(u \widehat{\otimes} \beta(u)) \\
& = \alpha (u \widehat{\otimes} u(C)) \\
& = u(t^{-1}D) M_{u(C)_t} \\
& =  u(t^{-1}D) u(t^{-1}C) \\
& = e^{-t^{-2} C^2} e^{-t^{-2} D^2},
\end{align*}
and \[\gamma_t(u) = u(t^{-1} B) =  e^{-t^{-2} B^2},\]
and these are both asymptotically equivalent by Proposition~\ref{asyequivofCandDwithu}.

For $f=v$,
\begin{align*}
\alpha(\text{id} \widehat{\otimes} \beta)(\Delta(u)) 
& = \alpha(\text{id} \widehat{\otimes} \beta)(u\widehat{\otimes} v +  v \widehat{\otimes} u) \\
& = \alpha(u\widehat{\otimes} \beta(v) +  v \widehat{\otimes} \beta(u)) \\
& =  \alpha(u\widehat{\otimes} v(C) +  v \widehat{\otimes} u(C)) \\
& \sim_{asy} \alpha(u\widehat{\otimes} v(C)) + \alpha( v \widehat{\otimes} u(C)) \\
& = u(t^{-1}D) M_{v(C)_t} +  v(t^{-1}D) M_{u(C)_t} \\
& =  u(t^{-1}D) v(t^{-1}C)+ v(t^{-1}D) u(t^{-1}C) \\
& = e^{-t^{-2} D^2} t^{-1}C e^{-t^{-2} C^2}+t^{-1} D e^{-t^{-2} D^2}e^{-t^{-2} C^2} \\
& = t^{-1}(C+D) e^{-t^{-2} C^2} e^{-t^{-2} D^2},
\end{align*}
and $$\gamma_t(v) = v(t^{-1}B) = t^{-1}B e^{-t^{-2} B^2}.$$ Then by Proposition~\ref{asyequivofCandDwithv} these are asymptotically equivalent. 
Hence the composition of $\alpha$ and $\beta$ is asymptotically equivalent to $\gamma_t$.  
\end{proof}

\begin{corollary}\label{k-theorygroupsareisomorphic}
The composition $\alpha_{\ast} \beta_{\ast}$ of the induced homomorphisms $$\beta_\ast \colon K(\mathcal{K}(\mathcal{H})) \rightarrow K(C_0(V, \text{Cliff}(V)))$$ and $$\alpha_{\ast} \colon K(C_0(V,\text{Cliff}(V))) \rightarrow K(\mathbb{\mathcal{K}(\mathcal{H})}),$$ is the identity homomorphism.
\end{corollary}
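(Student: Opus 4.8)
The plan is to deduce the corollary directly from Theorem~\ref{composition equivalent to gamma} together with the spectral information about the Bott operator $B$ collected in Corollary~\ref{rank1}. By Theorem~\ref{composition equivalent to gamma}, the composition $\alpha_t\circ(\mathrm{id}\widehat{\otimes}\beta)\circ\Delta$ is asymptotically equivalent to the asymptotic morphism $\gamma_t\colon\mathcal{S}\dashrightarrow\mathcal{K}(\mathcal{H}(V))$ given by $\gamma_t(f)=f(t^{-1}B)$; since asymptotically equivalent asymptotic morphisms induce the same map on $K$-theory, and since the $K$-theoretic composite $\alpha_\ast\beta_\ast$ is by definition (recall the formula $\beta_\ast(x)=x\times b$ and the description of the $E$-theory/$K$-theory product via $\Delta$) the map induced by exactly this composition, it suffices to show that $(\gamma_t)_\ast\colon K(\mathbb{R})\to K(\mathcal{K}(\mathcal{H}(V)))$ is the identity, once we identify $K(\mathcal{K}(\mathcal{H}(V)))$ with $K(\mathbb{R})$ via the standard stability isomorphism.

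Next I would analyze $\gamma_t$. The first step is to replace $\gamma_t$ by a genuine $\ast$-homomorphism up to homotopy: for each fixed $t$ the map $f\mapsto f(t^{-1}B)$ is an honest $\ast$-homomorphism $\mathcal{S}\to\mathcal{B}(\mathcal{H}(V))$, and by Corollary~\ref{rank1}(2) the operator $B$ has compact resolvent, so $f(t^{-1}B)\in\mathcal{K}(\mathcal{H}(V))$ for $f\in\mathcal{S}=C_0(\mathbb{R})$ (functions vanishing at infinity of an operator with compact resolvent are compact). Thus $\gamma_1\colon f\mapsto f(B)$ is a $\ast$-homomorphism $\mathcal{S}\to\mathcal{K}(\mathcal{H}(V))$, and the family $\{\gamma_t\}$ is a homotopy (via the rescaling $t^{-1}$, or rather a reparametrization of $[1,\infty)$ into $(0,1]$, together with a limiting argument at $t=\infty$ where $f(t^{-1}B)$ converges to $f(0)\cdot p_0$, $p_0$ the rank-one projection onto $\ker B$) connecting $\gamma_1$ to the $\ast$-homomorphism $f\mapsto f(0)p_0$. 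The key computational input here is Corollary~\ref{rank1}(3): $B$ has one-dimensional kernel spanned by $e^{-\frac12\|v\|^2}$, and all other eigenvalues are bounded away from $0$ by Proposition~\ref{eigenvalues}(1)--(2) (the nonzero eigenvalues are positive integers), so the functional calculus behaves well under the scaling $t\to\infty$.

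Then I would identify the $\ast$-homomorphism $f\mapsto f(0)p_0$ at the level of $K$-theory: it factors as $\mathcal{S}\xrightarrow{\mathrm{ev}_0}\mathbb{R}\xrightarrow{e\mapsto e\,p_0}\mathcal{K}(\mathcal{H}(V))$, where $\mathrm{ev}_0$ is evaluation at $0$ and $e\mapsto ep_0$ is the inclusion of $\mathbb{R}$ as a rank-one corner. Evaluation at $0$ induces the canonical identification $K(\mathcal{S})\cong K(\mathbb{R})$ used to define $K(\mathbb{R})$ as $[\mathcal{S},\,\mathbb{R}\widehat{\otimes}\mathcal{K}]$ in the first place — more precisely, the class of the identity in $K_0(\mathbb{R})$ corresponds under this machinery to $\mathrm{ev}_0$ composed with a corner inclusion — and the rank-one corner inclusion into $\mathcal{K}(\mathcal{H}(V))$ induces the stability isomorphism $K(\mathbb{R})\xrightarrow{\cong}K(\mathcal{K}(\mathcal{H}(V)))$. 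Hence $(\gamma_t)_\ast$ is, after this identification, the identity on $K(\mathbb{R})$, which gives $\alpha_\ast\beta_\ast=\mathrm{id}$.

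The main obstacle I anticipate is the homotopy argument in the middle step: one must be careful that the rescaled family $f\mapsto f(t^{-1}B)$, $t\in[1,\infty]$, really assembles into a $\ast$-homomorphism $\mathcal{S}\to C([0,1],\mathcal{K}(\mathcal{H}(V)))$ (after reparametrizing $[1,\infty]\cong[0,1]$), including continuity and compactness at the endpoint $t=\infty$; this uses uniform control of the functional calculus, which is where the discreteness of the spectrum of $B$ (integer eigenvalues, finite multiplicities, spectral gap at $0$) from Proposition~\ref{eigenvalues} is essential. A secondary bookkeeping point is to make sure the identification of $K(\mathcal{K}(\mathcal{H}(V)))$ with $K(\mathbb{R})$ used on the two sides (once implicitly inside $K(\mathbb{R})=[\mathcal{S},\mathbb{R}\widehat{\otimes}\mathcal{K}]$, once as the target of $\alpha_\ast$) is the same canonical stabilization map, so that "identity homomorphism" is a meaningful statement; this is routine but worth stating explicitly.
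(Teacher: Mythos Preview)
Your overall strategy is exactly the paper's: reduce via Theorem~\ref{composition equivalent to gamma} to the asymptotic morphism $\gamma_t(f)=f(t^{-1}B)$, note that each $\gamma_t$ is already a $\ast$-homomorphism into $\mathcal{K}(\mathcal{H}(V))$ because $B$ has compact resolvent, and then construct a homotopy from $\gamma$ to the rank-one map $\theta(f)=f(0)p$, which represents the identity under the stability isomorphism. The paper even checks the homotopy on the generators $u,v$ using the eigenbasis from Proposition~\ref{eigenvalues}, just as your ``uniform control'' remark anticipates.

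There is, however, a directional slip in your homotopy step. You write that as $t\to\infty$ one has $f(t^{-1}B)\to f(0)p_0$. That is false: as $t\to\infty$ the operator $t^{-1}B\to 0$, so $f(t^{-1}B)\to f(0)\cdot\mathrm{Id}$ in the strong topology, which is not even compact. The correct limit is in the \emph{opposite} direction: one runs a new parameter $s\in(0,1]$ and considers $f(s^{-1}B)$; as $s\to 0$ the nonzero eigenvalues of $s^{-1}B$ escape to infinity, and since $f\in C_0(\mathbb{R})$ and the nonzero spectrum of $B$ is bounded away from $0$ (the spectral gap from Proposition~\ref{eigenvalues}), one gets $\|f(s^{-1}B)-f(0)p\|\to 0$. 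This is precisely how the paper writes the homotopy. Your invocation of the spectral gap is the right ingredient, but it is needed to control the large-eigenvalue direction, not the small one; once you reverse the scaling the rest of your argument goes through verbatim.
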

\begin{proof}
By Theorem~\ref{composition equivalent to gamma}, the composition of $\alpha_{\ast} \beta_{\ast}$ is equivalent to the asymptotic morphism $\gamma_t$, given by $\gamma_t(f) = f(t^{-1}B)$. Since $f$ is in $\mathcal{S}$, each $\gamma_t$ is a $\ast$-homomorphism and $\gamma$ is homotopic to the $\ast$-homomorphism mapping $f$ to $f(B)$. 
Now it suffices to define a homotopy between $\gamma$ and the map $\theta \colon \mathcal{S} \rightarrow \mathcal{K}(\mathcal{H})$ defined by $\theta(f) = f(0)p$ where $p$ is a rank~1 projection (by Corollary~\ref{rank1}) onto the kernel of $B$.

%

Let $f(x)=u(x)=e^{-x^2}$ or $f(x)=v(x)=x e^{-x^2}$.

Consider an eigenvector $w$ of $u(B)$ or $v(B)$ with non-zero eigenvalue, then 
\[u(t^{-1} B) w \to 0 ~ ~ \text{and}   ~ ~ v(t^{-1} B) w \to 0\]
as $t \to 0$. 

Now if $w \in \text{ker}(B)$, then
\[u(t^{-1} B) w = w ~ ~ \text{and}   ~ ~ v(t^{-1} B) w = 0\]
for all $t$.

Finally, for an eigenvector $w$ of $u(B)$ or $v(B)$ respectively 
\begin{equation*}
u(0)p(w)= 
\begin{cases}
0 & \text{if}\; w \notin \text{ker}(B) \\
w & \text{if} \; w \in \text{ker}(B),
\end{cases}
\end{equation*}
and $v(0)p(w) = 0$.

By Proposition~\ref{eigenvalues} and Corollary~\ref{rank1}, $\mathcal{H}(V)$ has a basis consisting of eigenvectors of $u(B)$ and $v(B)$.
Combining these, we have 
\[||f(t^{-1}B) - f(0)p|| \to 0,\]
as $t \to 0$ when $f=u$ or $f=v$. Since $u$ and $v$ generate $\mathcal{S}$, it follows that the above holds for all $f \in \mathcal{S}$, and so we have a homotopy between $\gamma$ and $\theta$ defined by
\begin{equation*}
f \mapsto
\begin{cases}
f(s^{-1}B) & \text{if}\; s \in (0, 1] \\
f(0)p& \text{if} \; s =0,
\end{cases}
\end{equation*}
where $p$ is the projection onto the kernel of $B$. 
\end{proof}

The converse of this now follows below. 

\begin{definition}
Define the \emph{flip map} 
\[ l \colon A \widehat{\otimes} B \rightarrow B \widehat{\otimes} A,\]
by 
\[ l(a \widehat{\otimes} b) = (-1)^{\text{deg}(a) \text{deg}(b)} b \widehat{\otimes} a,\]
for all $a \in A$ and $b \in B$. 
\end{definition}
Notice that $l$ is a $\ast$-isomorphism. Let $\mathcal{C}(V) = C_0(V,\text{Cliff}(V))$ for simplicity throughout the following statements.

\begin{lemma} \label{asymptoticallycommutes}
The diagram 
\[\xymatrixcolsep{3pc}\xymatrixrowsep{3pc}\xymatrix{
\mathcal{S} \widehat{\otimes} \mathcal{C}(V) \ar[d]^{\Delta \widehat{\otimes} \text{id}_{\mathcal{C}(V)}}  \ar[r]^{\Delta \widehat{\otimes} \text{id}_{\mathcal{C}(V)}}& \mathcal{S} \widehat{\otimes} \mathcal{S}\widehat{\otimes} \mathcal{C}(V) \ar@{.>}[r]^{\text{id}_{\mathcal{S}} \widehat{\otimes} \alpha_t} &\mathcal{S} \widehat{\otimes} \mathcal{K}(\mathcal{H}) \ar[r]^{ \beta \widehat{\otimes}\text{id}_{\mathcal{K}(\mathcal{H})}} & \mathcal{C}(V) \widehat{\otimes} \mathcal{K}(\mathcal{H}) \ar[d]_{\l} \\ 
\mathcal{S} \widehat{\otimes} \mathcal{S}\widehat{\otimes} \mathcal{C}(V) \ar[r]_{\text{id}_{\mathcal{S}} \widehat{\otimes} \beta \widehat{\otimes}\text{id}_{\mathcal{C}(V)}}  & \mathcal{S} \widehat{\otimes} \mathcal{C}(V) \widehat{\otimes} \mathcal{C}(V) \ar[r]_{\text{id}_{\mathcal{S}} \widehat{\otimes} l}  & \mathcal{S} \widehat{\otimes} \mathcal{C}(V)\widehat{\otimes} \mathcal{C}(V)  \ar@{.>}[r]_{\alpha_t \widehat{\otimes} \text{id}_{\mathcal{C}(V)}} & \mathcal{K}(\mathcal{H}) \widehat{\otimes} \mathcal{C}(V),} \]
asymptotically commutes.
\end{lemma}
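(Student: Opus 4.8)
The plan is to evaluate both composites on a generating set of $\mathcal{S}\widehat{\otimes}\mathcal{C}(V)$ and verify that they agree there asymptotically. Recall that if two asymptotic morphisms out of a $C^\ast$-algebra agree asymptotically on a subset $G$, then they agree asymptotically on the closed $\ast$-subalgebra generated by $G$; since $u$ and $v$ generate $\mathcal{S}$ by Lemma~\ref{Agenerates}, the elements $\{u\widehat{\otimes}h,\ v\widehat{\otimes}h : h\in\mathcal{C}(V)\}$ generate a dense $\ast$-subalgebra of $\mathcal{S}\widehat{\otimes}\mathcal{C}(V)$, so it is enough to show that the two routes around the square agree asymptotically on $u\widehat{\otimes}h$ and on $v\widehat{\otimes}h$ for each $h\in\mathcal{C}(V)$. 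As usual we may take $h$ homogeneous of degree $p\in\{0,1\}$ and sum. This is the real-case analogue of the corresponding step in~\cite{GH04}.

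The structural fact that makes the square match is that $\Delta$ is \emph{cocommutative}: with $l$ the flip on $\mathcal{S}\widehat{\otimes}\mathcal{S}$ one has $l\circ\Delta=\Delta$, which is immediate from the symmetry of $\Delta(u)=u\widehat{\otimes}u$ and $\Delta(v)=u\widehat{\otimes}v+v\widehat{\otimes}u$. Around the top edge, $\alpha_t$ is applied to the inner copy of $\mathcal{S}$ produced by $\Delta$ and $\beta$ to the outer one; around the bottom edge these roles are reversed. Cocommutativity of $\Delta$ is precisely what lets one interchange these two assignments, and the flips $l$ occurring on the right edge and the bottom edge are exactly what is needed to reconcile the two orderings of the factors $\mathcal{K}(\mathcal{H})$ and $\mathcal{C}(V)$ in the common target $\mathcal{K}(\mathcal{H})\widehat{\otimes}\mathcal{C}(V)$.

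Concretely, using $\beta(u)=u(P)$ (an even element of $\mathcal{C}(V)$), $\beta(v)=v(P)$ (an odd element), and $\alpha_t(f\widehat{\otimes}h)=f(t^{-1}D)M_{h_t}$, one traces $u\widehat{\otimes}h$ through both routes and gets $u(t^{-1}D)M_{h_t}\widehat{\otimes}u(P)$ in each case; tracing $v\widehat{\otimes}h$ with $h$ of degree $p$ through both routes gives, in each case,
\[ v(t^{-1}D)M_{h_t}\widehat{\otimes}u(P)+(-1)^{p}\,u(t^{-1}D)M_{h_t}\widehat{\otimes}v(P), \]
where the signs come from the Koszul rule for $l$ together with $\deg(u(P))=\deg(u(t^{-1}D))=0$ and $\deg(v(P))=\deg(v(t^{-1}D))=1$. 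So on the chosen generators the two composites are in fact literally equal; the only asymptotic slack enters when one extends off the generators via multiplicativity, where $\alpha_t$ is merely an asymptotic and not an honest homomorphism. Hence the square asymptotically commutes.

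The subtleties here are organizational rather than computational. First, one must check that the two routes are genuinely defined as asymptotic morphisms on all of $\mathcal{S}\widehat{\otimes}\mathcal{C}(V)$ (and not just on the algebraic tensor product) before the reduction to generators is legitimate; this uses the $\Delta$-composition recipe recalled earlier together with Lemmas~\ref{compactoperator} and~\ref{doublecommutatordisappears}, which guarantee that $\alpha_t$ lands in $\mathcal{K}(\mathcal{H}(V))$ and is asymptotically multiplicative. Second, one must keep the grading signs in the two flip maps straight; writing a general $h$ as a sum of homogeneous pieces as above makes this mechanical. I expect the sign bookkeeping in the $v\widehat{\otimes}h$ case to be the most error-prone part of writing out the full details.
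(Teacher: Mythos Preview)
Your approach is correct and essentially identical to the paper's: both reduce to the generators $u\widehat{\otimes}h$ and $v\widehat{\otimes}h$ and trace each route around the square explicitly. Your sign tracking in the $v\widehat{\otimes}h$ case (retaining the $(-1)^{p}$ from the Koszul rule when $h$ has degree $p$) is in fact more careful than the paper's displayed computation, which silently suppresses that sign; your notation $u(P),v(P)$ for $\beta(u),\beta(v)$ coincides with the paper's $u(C),v(C)$ under the identification of $\mathcal{C}(V)$ with multiplication operators.
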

\begin{proof}
Since $\mathcal{S}$ is generated by the elements $u(x) = e^{-x^2}$ and $v(x) =x e^{-x^2}$ it suffices to check that the diagram asymptotically commutes for $u$ and $v$ in $\mathcal{S}$. 
Let $$f_1 =  l(\beta \widehat{\otimes}\text{id}_{\mathcal{K}(\mathcal{H})})(\text{id}_{\mathcal{S}} \widehat{\otimes} \alpha_t) (\Delta \widehat{\otimes} \text{id}_{\mathcal{C}(V)})  $$
and $$g_1 = (\alpha_t \widehat{\otimes} \text{id}_{\mathcal{C}(V)}) (\text{id}_{\mathcal{S}} \widehat{\otimes} l) (\text{id}_{\mathcal{S}} \widehat{\otimes} \beta \widehat{\otimes}\text{id}_{\mathcal{C}(V)})(\Delta \widehat{\otimes} \text{id}_{\mathcal{C}(V)})$$
For $u$ and $h \in \mathcal{C}(V)$, 
\begin{align*}
f_1 (u \widehat{\otimes} h) 
&=  l(\beta \widehat{\otimes}\text{id}_{\mathcal{K}(\mathcal{H})})(\text{id}_{\mathcal{S}} \widehat{\otimes} \alpha_t)  (u \widehat{\otimes} u \widehat{\otimes} h) \\
& =   l(\beta \widehat{\otimes}\text{id}_{\mathcal{K}(\mathcal{H})}) (u \widehat{\otimes} u(t^{-1} D) M_{h_t}) \\
& =l  (u(C) \widehat{\otimes} u(t^{-1} D) M_{h_t})\\
& =  u(t^{-1} D) M_{h_t}\widehat{\otimes}u(C),
\end{align*}
and  
\begin{align*}
g_1(u \widehat{\otimes} h)
& =  (\alpha_t \widehat{\otimes} \text{id}_{\mathcal{C}(V)}) (\text{id}_{\mathcal{S}} \widehat{\otimes} l) (\text{id}_{\mathcal{S}} \widehat{\otimes} \beta \widehat{\otimes}\text{id}_{\mathcal{C}(V)})(u \widehat{\otimes}u \widehat{\otimes}  h)\\ 
& = (\alpha_t \widehat{\otimes} \text{id}_{\mathcal{C}(V)}) (\text{id}_{\mathcal{S}} \widehat{\otimes} l)  (u \widehat{\otimes}u(C) \widehat{\otimes}  h)\\ 
& =    (\alpha_t \widehat{\otimes} \text{id}_{\mathcal{C}(V)})(u \widehat{\otimes}  h\widehat{\otimes}u(C))\\ 
& = u(t^{-1} D) M_{h_t} \widehat{\otimes} u(C).
\end{align*}
Hence we have an asymptotic equivalence in the case when we take $u$. 
Now consider $v$. 
We have, 
\begin{align*}
f_1 (v \widehat{\otimes} h) 
&= l(\beta \widehat{\otimes}\text{id}_{\mathcal{K}(\mathcal{H})})(\text{id}_{\mathcal{S}} \widehat{\otimes} \alpha_t) ((u \widehat{\otimes} v +v \widehat{\otimes} u) \widehat{\otimes} h) \\
& \sim_{\text{asy}}  l(\beta \widehat{\otimes}\text{id}_{\mathcal{K}(\mathcal{H})})(u \widehat{\otimes} v(t^{-1} D)M_{h_t} +v \widehat{\otimes} u(t^{-1} D)M_{h_t}) \\
& =l (u(C) \widehat{\otimes} v(t^{-1} D)M_{h_t} +v(C)  \widehat{\otimes} u(t^{-1} D)M_{h_t}) \\ 
& = v(t^{-1} D)M_{h_t}\widehat{\otimes} u(C) + u(t^{-1} D)M_{h_t}  \widehat{\otimes} v(C),
\end{align*}
and 
\begin{align*}
g_1(v \widehat{\otimes} h) 
& =(\alpha_t \widehat{\otimes} \text{id}_{\mathcal{C}(V)}) (\text{id}_{\mathcal{S}} \widehat{\otimes} l) (\text{id}_{\mathcal{S}} \widehat{\otimes} \beta \widehat{\otimes}\text{id}_{\mathcal{C}(V)})((u \widehat{\otimes} v +v \widehat{\otimes} u) \widehat{\otimes} h) \\
& =  (\alpha_t \widehat{\otimes} \text{id}_{\mathcal{C}(V)}) (\text{id}_{\mathcal{S}} \widehat{\otimes} l) (u \widehat{\otimes} v(C)\widehat{\otimes} h +v \widehat{\otimes} u(C)\widehat{\otimes} h) \\
& =   (\alpha_t \widehat{\otimes} \text{id}_{\mathcal{C}(V)}) (u \widehat{\otimes} h \widehat{\otimes} v(C)+v \widehat{\otimes} h\widehat{\otimes} u(C)) \\
& \sim_{\text{asy}}  u(t^{-1}D)M_{h_t} \widehat{\otimes} v(C)+v(t^{-1}D)M_{h_t} \widehat{\otimes} u(C) \\
\end{align*}
and again we have the diagram commuting asymptotically. 
\end{proof}

\begin{lemma}\label{flipmaphomotopic}
The flip map on $\mathcal{C}(V) \widehat{\otimes} \mathcal{C}(V)$ is homotopic via graded $\ast$-homomorphisms to the map $h_1 \widehat{\otimes} h_2 \mapsto h_1 \widehat{\otimes} \iota h_2$, where $\iota$ is the automorphism on $\mathcal{C}(V)$ induced by the map $e \mapsto -e$ in $V$. 
\end{lemma}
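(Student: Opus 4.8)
The plan is to realise both $\ast$-homomorphisms in the statement as automorphisms of $\mathcal{C}(V)\widehat{\otimes}\mathcal{C}(V)$ coming from orthogonal transformations of $V\oplus V$, and then to connect the two transformations by an explicit path inside the orthogonal group.

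First I would fix the canonical identification
\[\mathcal{C}(V)\widehat{\otimes}\mathcal{C}(V)\;\cong\;C_0\big(V\oplus V,\ \text{Cliff}(V\oplus V)\big).\]
Since $\text{Cliff}(V)$ is finite dimensional we may write $\mathcal{C}(V)=C_0(V)\widehat{\otimes}\text{Cliff}(V)$ with $C_0(V)$ trivially graded; commuting the two middle (trivially graded, hence central) tensor factors past one another and using $C_0(V)\widehat{\otimes}C_0(V)=C_0(V\times V)$ together with the standard graded isomorphism $\text{Cliff}(V)\widehat{\otimes}\text{Cliff}(V)\cong\text{Cliff}(V\oplus V)$ gives the claim. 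Under this identification every $U\in O(V\oplus V)$ determines a graded $\ast$-automorphism $U_\ast$ of $C_0(V\oplus V,\text{Cliff}(V\oplus V))$, acting on the base coordinate by $F\mapsto F\circ U^{-1}$ and on the $\text{Cliff}(V\oplus V)$-coefficients by functoriality of the Clifford construction (orthogonal maps preserve parity, so $U_\ast$ preserves the grading). A norm-continuous path $\theta\mapsto U_\theta$ in $O(V\oplus V)$ then yields a homotopy $\theta\mapsto(U_\theta)_\ast$ through graded $\ast$-homomorphisms: the only point to note is that $\theta\mapsto(U_\theta)_\ast(F)$ is continuous for fixed $F\in C_0$, which is routine since $F$ is uniformly continuous and vanishes at infinity and $\theta\mapsto U_\theta$ is continuous.

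Next I would compute the two maps in these terms. The map $h_1\widehat{\otimes}h_2\mapsto h_1\widehat{\otimes}\iota(h_2)$ is visibly $(\mathrm{id}_V\oplus(-\mathrm{id}_V))_\ast$, since by definition $\iota$ is the automorphism of $\mathcal{C}(V)$ induced by $-\mathrm{id}_V\in O(V)$ (acting on both base and Clifford coefficients). The flip $l$ is $\tau_\ast$ for the coordinate swap $\tau(v,w)=(w,v)$: on the $C_0$-part this is immediate, and on the Clifford part one checks that the Koszul-signed flip $\text{Cliff}(V)\widehat{\otimes}\text{Cliff}(V)\to\text{Cliff}(V)\widehat{\otimes}\text{Cliff}(V)$, $a\widehat{\otimes}b\mapsto(-1)^{\deg a\deg b}\,b\widehat{\otimes}a$, sends $e_i\widehat{\otimes}1\mapsto 1\widehat{\otimes}e_i$ and $1\widehat{\otimes}e_i\mapsto e_i\widehat{\otimes}1$ and is multiplicative, hence agrees on generators with the automorphism of $\text{Cliff}(V\oplus V)$ induced by $\tau$. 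I would make this precise by exhibiting the one-line comparison of signs on a monomial $e_i\widehat{\otimes}e_j$, where both maps produce $-\,e_j\widehat{\otimes}e_i$, and by tracking the Koszul signs through the identification of the previous paragraph.

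Finally I would connect $\tau$ to $\mathrm{id}_V\oplus(-\mathrm{id}_V)$ inside $O(V\oplus V)$. Fixing an orthonormal basis $e_1,\dots,e_n$ of $V$, the planes $W_i=\mathrm{span}\{(e_i,0),(0,e_i)\}$ give an orthogonal decomposition of $V\oplus V$ preserved by both transformations, each of which acts on $W_i$ as a reflection. Letting $R^{(i)}_\phi$ be the reflection of $W_i$ in the line making angle $\phi$ with the first axis, the path $\phi\mapsto U_\phi:=\bigoplus_i R^{(i)}_\phi$ lies in $O(V\oplus V)$ and runs from $U_0=\mathrm{id}_V\oplus(-\mathrm{id}_V)$ at $\phi=0$ to $U_{\pi/4}=\tau$ at $\phi=\pi/4$; reparametrising $[0,\pi/4]$ to $[0,1]$ and composing with the identifications above produces the required homotopy of graded $\ast$-homomorphisms. (Equivalently, one may simply observe that $\tau$ and $\mathrm{id}_V\oplus(-\mathrm{id}_V)$ both have determinant $(-1)^n$, hence lie in the same connected component of $O(V\oplus V)$, and invoke path-connectedness of that component.) The only genuinely delicate part of the argument is the sign bookkeeping identifying $l$ with $\tau_\ast$ — keeping straight which signs come from the Koszul rule in the graded tensor product and which from reordering Clifford generators — but once the identification $\mathcal{C}(V)\widehat{\otimes}\mathcal{C}(V)\cong C_0(V\oplus V,\text{Cliff}(V\oplus V))$ is pinned down, this is a short computation.
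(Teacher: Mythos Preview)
Your argument is correct. The paper does not give its own proof but refers to Lemma~18 of \cite{HKT98}, and the approach you outline --- identifying $\mathcal{C}(V)\widehat{\otimes}\mathcal{C}(V)$ with $C_0(V\oplus V,\text{Cliff}(V\oplus V))$, recognising both the flip and $\mathrm{id}\widehat{\otimes}\iota$ as the automorphisms induced by $\tau$ and $\mathrm{id}_V\oplus(-\mathrm{id}_V)$ in $O(V\oplus V)$, and connecting these by a path of orthogonal transformations --- is exactly the rotation argument used there.
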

A proof can be found in \cite{HKT98}, Lemma~18. 

\begin{corollary}\label{isofrombeta}
There is an isomorphism: 
$$K(\mathbb{R}) \rightarrow K(\mathcal{C}(V))$$ 
induced by $\beta\colon  S \rightarrow \mathcal{C}(V)$. 
\end{corollary}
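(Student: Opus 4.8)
The plan is to exhibit $\alpha_*$ as a two-sided inverse of $\beta_*$ on $K$-theory, after the standard identification $K(\mathbb{R})\cong K(\mathcal{K}(\mathcal{H}))$ coming from stability of $K$-theory. One inclusion is already available: Corollary~\ref{k-theorygroupsareisomorphic} gives $\alpha_*\circ\beta_*=\mathrm{id}$, so $\beta_*$ is split injective, and it remains only to see that $\beta_*$ is surjective. For that it is enough to prove that the endomorphism $\beta_*\circ\alpha_*$ of $K(\mathcal{C}(V))$ is invertible. This endomorphism is induced by the $E$-theory class $[\alpha]\times[\beta]\in E(\mathcal{C}(V),\mathcal{C}(V))$, i.e.\ by the composition of the asymptotic morphism $\alpha_t$ with the $\ast$-homomorphism $\beta$, and I would compute this class by the Atiyah rotation trick, the necessary inputs being Lemmas~\ref{asymptoticallycommutes} and~\ref{flipmaphomotopic}.

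First I would use Lemma~\ref{asymptoticallycommutes} to represent $[\alpha]\times[\beta]$ by the bottom row of the diagram there; the virtue of this presentation is that it displays the flip $l$ on $\mathcal{C}(V)\widehat{\otimes}\mathcal{C}(V)$ as an explicit factor. Then, invoking Lemma~\ref{flipmaphomotopic}, I would replace that occurrence of $l$ by the homotopic graded $\ast$-homomorphism $h_1\widehat{\otimes}h_2\mapsto h_1\widehat{\otimes}\iota(h_2)$; this is legitimate because a homotopy of $\ast$-homomorphisms composes with asymptotic morphisms to give a homotopy of asymptotic morphisms. Tracing the bottom row through this substitution, and using that the Clifford operator $C$ is pointwise multiplication by $P$ so that $M_{\beta(f)_t}=f(t^{-1}C)$, one finds that the composite is homotopic to the asymptotic morphism $f\widehat{\otimes}h\mapsto\bigl(\sum_i f'_i(t^{-1}D)\,f''_i(t^{-1}C)\bigr)\widehat{\otimes}\iota(h)$, where $\Delta f=\sum_i f'_i\widehat{\otimes}f''_i$. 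By Theorem~\ref{composition equivalent to gamma} the bracketed operator is asymptotically equivalent to $\gamma_t(f)=f(t^{-1}B)$, and by the proof of Corollary~\ref{k-theorygroupsareisomorphic} the asymptotic morphism $\gamma$ is homotopic to $\theta\colon f\mapsto f(0)p$ with $p$ the rank-one projection onto $\ker B$ (Corollary~\ref{rank1}). Hence $[\alpha]\times[\beta]$ is represented by $f\widehat{\otimes}h\mapsto f(0)\,p\widehat{\otimes}\iota(h)$, which is nothing but the stabilisation of the automorphism $\iota$; in other words $\beta_*\circ\alpha_*=\iota_*$ as endomorphisms of $K(\mathcal{C}(V))$.

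It then remains to observe that $\iota$ is an involutive graded $\ast$-automorphism of $\mathcal{C}(V)$ (it is induced by $v\mapsto -v$, hence $\iota^2=\mathrm{id}$), so $\iota_*$ is invertible; therefore $\beta_*$ is surjective, and being also injective it is the desired isomorphism. (A posteriori $\alpha_*=\beta_*^{-1}$ and so in fact $\iota_*=\mathrm{id}$; one can also see this directly, since $\iota$ is the functorial action on $\mathcal{C}(V)$ of $-1_V\in O(V)$ and $\beta$ is $O(V)$-invariant, in which case $\beta_*\circ\alpha_*=\mathrm{id}$ outright.) The step I expect to require the most care is the second paragraph: correctly tracking the order of the tensor legs and the graded sign conventions through the bottom row of Lemma~\ref{asymptoticallycommutes}, verifying that after the replacement of $l$ by $\mathrm{id}\widehat{\otimes}\iota$ it is precisely the leg carrying $h$ (and not the leg produced by $\beta$) that acquires $\iota$ while the $\beta$-leg is the one consumed by $\alpha_t$, and confirming that the $\mathcal{K}(\mathcal{H})$-stabilisation genuinely lets one read off $\iota_*$ on $K(\mathcal{C}(V))$ from the formula $f\widehat{\otimes}h\mapsto f(0)p\widehat{\otimes}\iota(h)$; the homotopy-theoretic content of the rotation trick is then routine.
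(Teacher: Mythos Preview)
Your proposal is correct and follows essentially the same route as the paper's own argument: use Corollary~\ref{k-theorygroupsareisomorphic} for one composite, then analyse the other composite $\beta_*\alpha_*$ by passing via Lemma~\ref{asymptoticallycommutes} to the bottom row, replacing the flip on $\mathcal{C}(V)\widehat{\otimes}\mathcal{C}(V)$ by $\mathrm{id}\widehat{\otimes}\iota$ through Lemma~\ref{flipmaphomotopic}, and invoking Theorem~\ref{composition equivalent to gamma} together with the homotopy from the proof of Corollary~\ref{k-theorygroupsareisomorphic} to reduce to $f\widehat{\otimes}h\mapsto f(0)p\widehat{\otimes}\iota(h)$. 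The only cosmetic difference is that you package the conclusion as ``$\beta_*\alpha_*=\iota_*$, hence invertible because $\iota$ is an involutive automorphism,'' whereas the paper simply records that the resulting map is a stabilisation of $\iota$ and hence a $K$-theory isomorphism; these are the same observation.
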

\begin{proof}
It suffices to check that the $f_1$ in the proof of Lemma~\ref{asymptoticallycommutes} induces an isomorphism in $K$-theory since by Corollary~\ref{k-theorygroupsareisomorphic} we have that $\alpha_\ast$ is left inverse to $\beta$. It follows from the stability of $K$-theory that:
\[K(\mathcal{S} \widehat{\otimes} \mathcal{C}(V)) \xrightarrow{\alpha_\ast} K(\mathcal{K}(\mathcal{H})) \cong K(\mathbb{R}).\]
So we need to show: 
\[\xymatrixcolsep{3pc}\xymatrixrowsep{3pc}\xymatrix{\mathcal{S} \widehat{\otimes} \mathcal{C}(V) \ar[r]^{\Delta \widehat{\otimes} \text{id}_{\mathcal{C}(V)}}& \mathcal{S} \widehat{\otimes} \mathcal{S}\widehat{\otimes} \mathcal{C}(V) \ar@{.>}[r]^{\text{id}_{\mathcal{S}} \widehat{\otimes} \alpha_t}& \mathcal{S} \widehat{\otimes} \mathcal{K}(\mathcal{H})\ar[r]^{ \beta \widehat{\otimes}\text{id}_{\mathcal{K}(\mathcal{H})}} & \mathcal{C}(V) \widehat{\otimes} \mathcal{K}(\mathcal{H})  \ar[r]^{l} & \mathcal{K}(\mathcal{H}) \widehat{\otimes} \mathcal{C}(V) }
\]
induces an isomorphism on $K$-theory. 
By Lemma~\ref{asymptoticallycommutes}, we have an asymptotic equivalence which gives a homotopy equivalence. Further the composition in the diagram above is asymptotically equivalent to $$g_1 = (\alpha_t \widehat{\otimes} \text{id}_{\mathcal{C}(V)}) (\text{id}_{\mathcal{S}} \widehat{\otimes} l) (\text{id}_{\mathcal{S}} \widehat{\otimes} \beta \widehat{\otimes}\text{id}_{\mathcal{C}(V)})(\Delta \widehat{\otimes} \text{id}_{\mathcal{C}(V)})$$ which is homotopic by Lemma~\ref{flipmaphomotopic} to the composition 
$$(\alpha_t(\text{id}_{\mathcal{S}} \widehat{\otimes} \beta)\Delta) \widehat{\otimes} \iota$$
and hence by Theorem~\ref{composition equivalent to gamma} is homotopic to 
$\gamma \widehat{\otimes} \iota$, and so maps
$$ f \widehat{\otimes} h \mapsto f(t^{-1} B) \widehat{\otimes} \iota(h).$$
and so induces an isomorphism in $K$-theory by the homotopy defined similarly to that in the proof of Corollary~\ref{k-theorygroupsareisomorphic}
\begin{equation*}
f \widehat{\otimes} h \mapsto
\begin{cases}
 f(s^{-1}B) \widehat{\otimes} \iota(h) & \text{if}\; s \in (0, 1] \\
f(0) p \widehat{\otimes} \iota(h) & \text{if} \; s =0.
\end{cases}
\end{equation*}
\end{proof}
Then the proof of Theorem~\ref{Bott periodicty statement in beta} is complete, since we have shown that the map
\[K(\mathbb{R}) \rightarrow K(\mathbb{R} \widehat{\otimes} C_0(V, \text{Cliff}(V))),\]
is an isomorphism.

Observe that when $V=\mathbb{R}$ then 
\[C_0(V,\text{Cliff}(V)) = C_0(\mathbb{R},\text{Cliff}(\mathbb{R})) \cong C_0(\mathbb{R}) \widehat{\otimes} \text{Cliff}(\mathbb{R})) \cong \Sigma \widehat{\otimes} \mathbb{R}_{1,0}.\]
Then we obtain

\begin{corollary}
\[K(\mathbb{R}) \cong K(\Sigma \mathbb{R} \widehat{\otimes} \mathbb{R}_{1,0}).\]
\end{corollary}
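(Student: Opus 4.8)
The plan is to obtain this as a direct specialization of Theorem~\ref{Bott periodicty statement in beta} to $V = \mathbb{R}$, the only extra work being the identification of the target $C^\ast$-algebra. Taking $V = \mathbb{R}$ in Theorem~\ref{Bott periodicty statement in beta} gives an isomorphism $\beta_\ast \colon K(\mathbb{R}) \xrightarrow{\cong} K(\mathbb{R} \widehat{\otimes} C_0(\mathbb{R}, \text{Cliff}(\mathbb{R})))$, and since $\mathbb{R}$ is a unit for the graded tensor product we have $\mathbb{R} \widehat{\otimes} C_0(\mathbb{R}, \text{Cliff}(\mathbb{R})) \cong C_0(\mathbb{R}, \text{Cliff}(\mathbb{R}))$. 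So it remains to show $C_0(\mathbb{R}, \text{Cliff}(\mathbb{R})) \cong \Sigma \mathbb{R} \widehat{\otimes} \mathbb{R}_{1,0}$, which is exactly the observation recorded just before the statement.

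First I would justify the chain $C_0(\mathbb{R}, \text{Cliff}(\mathbb{R})) \cong C_0(\mathbb{R}) \widehat{\otimes} \text{Cliff}(\mathbb{R})$: the map sending an elementary tensor $f \otimes a$ to the function $v \mapsto f(v) a$ is a graded $\ast$-homomorphism with dense range, and because $C_0(\mathbb{R})$ (with the trivial grading) is nuclear it extends to a $\ast$-isomorphism for the norm of Definition~\ref{graded tensor product}. By the definition of suspension in Section~1, trivially graded $C_0(\mathbb{R})$ is $\Sigma\mathbb{R}$. Second I would identify $\text{Cliff}(\mathbb{R})$ with $\mathbb{R}_{1,0}$: fixing a unit vector $e$ as orthonormal basis of $\mathbb{R}$, the element $P(e) = e$ is odd and satisfies $e^2 = \|e\|^2 = 1$, which is precisely the presentation defining $\mathbb{R}_{1,0}$, so the two graded $C^\ast$-algebras coincide. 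Combining these gives $C_0(\mathbb{R}, \text{Cliff}(\mathbb{R})) \cong \Sigma\mathbb{R} \widehat{\otimes} \mathbb{R}_{1,0}$, and substitution into the displayed isomorphism above finishes the proof.

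There is no real obstacle here; the statement is a corollary in the strict sense. The only point requiring any care is the graded identification $C_0(X, A) \cong C_0(X) \widehat{\otimes} A$, and this is standard once one notes that the grading automorphism on $C_0(X) \widehat{\otimes} A$ acts only on the second tensor factor, so the verification reduces to the classical ungraded isomorphism together with a routine check that gradings are intertwined. Everything else is bookkeeping with the definitions of $\Sigma$, $\widehat{\otimes}$, and $\mathbb{R}_{1,0}$.
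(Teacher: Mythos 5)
Your proposal is correct and follows exactly the route the paper takes: specialize Theorem~\ref{Bott periodicty statement in beta} to $V=\mathbb{R}$, then identify $C_0(\mathbb{R},\text{Cliff}(\mathbb{R})) \cong C_0(\mathbb{R})\widehat{\otimes}\text{Cliff}(\mathbb{R}) \cong \Sigma\mathbb{R}\widehat{\otimes}\mathbb{R}_{1,0}$, which is the observation recorded immediately before the corollary. Your extra remarks (that $\mathbb{R}$ is a tensor unit, and the explicit check that $\text{Cliff}(\mathbb{R})=\mathbb{R}_{1,0}$) only spell out what the paper leaves implicit.
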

\qed

\begin{corollary}
For any real graded $C^\ast$-algebra, 
\[K(A) \cong K(\Sigma A \widehat{\otimes} \mathbb{R}_{1,0}).\]
\end{corollary}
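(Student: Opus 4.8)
The plan is to bootstrap from the concrete case $K(\mathbb{R}) \cong K(\Sigma\mathbb{R}\widehat{\otimes}\mathbb{R}_{1,0})$ of the previous corollary to an arbitrary real graded $C^\ast$-algebra $A$ by tensoring everything with $A$ and invoking stability. First I would observe that the isomorphism $K(\mathbb{R}) \to K(\Sigma\mathbb{R}\widehat{\otimes}\mathbb{R}_{1,0})$ is, by Theorem~\ref{Bott periodicty statement in beta} (specialised to $V=\mathbb{R}$), induced by the $\ast$-homomorphism $\beta\colon\mathcal{S}\to C_0(\mathbb{R},\text{Cliff}(\mathbb{R}))\cong\Sigma\mathbb{R}\widehat{\otimes}\mathbb{R}_{1,0}$, with the inverse at the level of $K$-theory coming from the asymptotic morphism $\alpha_t$. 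The key point is that these maps are natural: tensoring $\beta$ with $\mathrm{id}_A$ gives $\beta\widehat{\otimes}\mathrm{id}_A\colon\mathcal{S}\widehat{\otimes} A\to \Sigma A\widehat{\otimes}\mathbb{R}_{1,0}$, and tensoring $\alpha_t$ with $\mathrm{id}_A$ gives an asymptotic morphism in the other direction.

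The key steps, in order, are: (1) recall that $K(A) = [\mathcal{S},\Sigma^n A\widehat{\otimes}\mathcal{K}(\mathcal{H})]$, so that $K(A)$ is computed from $\ast$-homomorphisms out of $\mathcal{S}$ into algebras built from $A$; (2) form the composition $\mathcal{S}\xrightarrow{\Delta}\mathcal{S}\widehat{\otimes}\mathcal{S}\xrightarrow{\mathrm{id}\widehat{\otimes}\beta}\mathcal{S}\widehat{\otimes} C_0(V,\text{Cliff}(V))$ exactly as in Theorem~\ref{composition equivalent to gamma}, but now everywhere with an extra $\widehat{\otimes} A$ factor carried along passively; (3) check that Corollary~\ref{k-theorygroupsareisomorphic} and Corollary~\ref{isofrombeta} go through verbatim with $\mathcal{K}(\mathcal{H})$ replaced by $A\widehat{\otimes}\mathcal{K}(\mathcal{H})$, since the asymptotic equivalences of Theorem~\ref{composition equivalent to gamma}, the diagram of Lemma~\ref{asymptoticallycommutes}, and the homotopy $f\mapsto f(s^{-1}B)$ for $s\in(0,1]$, $f\mapsto f(0)p$ for $s=0$, all commute with tensoring by $\mathrm{id}_A$; (4) conclude that $\beta\widehat{\otimes}\mathrm{id}_A$ induces an isomorphism $K(A)\cong K(\Sigma A\widehat{\otimes}\mathbb{R}_{1,0})$ when $V=\mathbb{R}$; (5) finally note that naturality in $A$ — the maps are all of the form (a fixed morphism)$\,\widehat{\otimes}\,\mathrm{id}_A$ — gives the word \emph{natural} in the statement.

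Concretely, I would argue that for $V=\mathbb{R}$ the map $\beta_\ast\colon K(A)\to K(A\widehat{\otimes} C_0(\mathbb{R},\text{Cliff}(\mathbb{R})))$ has left inverse induced by $\alpha_t\widehat{\otimes}\mathrm{id}_A$ (by the $A$-tensored version of Corollary~\ref{k-theorygroupsareisomorphic}) and right inverse induced by the same map run through the flip-map argument (by the $A$-tensored version of Corollary~\ref{isofrombeta}), so it is an isomorphism; then substituting the identification $C_0(\mathbb{R},\text{Cliff}(\mathbb{R}))\cong\Sigma\mathbb{R}\widehat{\otimes}\mathbb{R}_{1,0}$ and using $A\widehat{\otimes}\Sigma\mathbb{R}\widehat{\otimes}\mathbb{R}_{1,0}\cong\Sigma A\widehat{\otimes}\mathbb{R}_{1,0}$ finishes the proof. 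The main obstacle — really the only thing requiring care rather than bookkeeping — is verifying that the compactness and asymptotic-vanishing estimates underlying $\alpha_t$ and the Mehler-formula computations (Lemma~\ref{compactoperator}, Lemma~\ref{doublecommutatordisappears}, Proposition~\ref{asyequivofCandDwithu}, Proposition~\ref{asyequivofCandDwithv}) remain valid after tensoring with an arbitrary, possibly infinite-dimensional, $A$; this holds because those estimates are norm estimates on operators of the form (operator on $\mathcal{H}(V)$)$\,\widehat{\otimes}\,\mathrm{id}_A$, whose norms are unchanged by the tensor factor, and because $f(D)M_h\widehat{\otimes}\mathrm{id}_A$ lies in $\mathcal{K}(\mathcal{H}(V))\widehat{\otimes} A\subseteq\mathcal{K}(\mathcal{H}(V))\widehat{\otimes} A$ which is precisely the algebra appearing in the definition of $K(A)$ after absorbing the compacts. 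Alternatively, and more cleanly, one avoids even this by pushing the whole argument into $E$-theory as in Theorem~C, but since the corollary is stated at the level of $K$-theory I would present the direct $\widehat{\otimes} A$ version.
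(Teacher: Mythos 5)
Your argument is correct, but it takes a substantially longer route than the paper. The paper's entire proof is one sentence: it invokes the $K$-theory product of~\cite{GH04}, Section~1.7. Abstractly, once $\beta$ determines an invertible class $b \in K(\Sigma\mathbb{R}\widehat{\otimes}\mathbb{R}_{1,0})$ with inverse $\alpha_\ast$, the external $K$-theory product $K(A) \times K(\mathcal{C}(\mathbb{R})) \to K(A\widehat{\otimes}\mathcal{C}(\mathbb{R}))$ lets one define $x \mapsto x \times b$ and $y \mapsto y \times a$ directly, and the associativity and unit axioms of the product give the mutual inverse property formally, with no need to revisit the analysis. Your approach instead re-runs the entire construction with a passive $\widehat{\otimes}\,\mathrm{id}_A$ factor — in effect unwinding exactly what the product encapsulates. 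This is legitimate and is essentially how one \emph{proves} that the product exists, so it is more self-contained; the price is that you must re-verify that $\alpha_t\widehat{\otimes}\mathrm{id}_A$ is a well-defined asymptotic morphism into $\mathcal{K}(\mathcal{H}(V))\widehat{\otimes} A$, that the asymptotic equivalences of Theorem~\ref{composition equivalent to gamma} and Lemma~\ref{asymptoticallycommutes} survive tensoring, and that the eigenvector homotopy in Corollary~\ref{k-theorygroupsareisomorphic} still converges after tensoring with $\mathrm{id}_A$. You correctly identify that these are all norm estimates of the shape $\|T\widehat{\otimes}\mathrm{id}_A\| = \|T\|$, so they pass through unchanged; this is the heart of why the product exists in the first place. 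One small imprecision: you write that $f(D)M_h\widehat{\otimes}\mathrm{id}_A$ lies in $\mathcal{K}(\mathcal{H}(V))\widehat{\otimes}A$, but $T\widehat{\otimes}\mathrm{id}_A$ is a multiplier, not an element, of that algebra unless $A$ is unital; what matters is that $\alpha_t(f\widehat{\otimes}h)\widehat{\otimes}a \in \mathcal{K}(\mathcal{H}(V))\widehat{\otimes}A$ for $a \in A$, which is what actually arises when evaluating the tensored asymptotic morphism. Your final observation that one could instead route through $E$-theory as in Theorem~C is also sound and is arguably the cleanest formulation, since the bifunctoriality there makes the tensoring with $A$ automatic.
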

The proof of this follows from the $K$-theory product described in section~1.7 of~\cite{GH04}.

Then combining this with Theorem~1.14 in~\cite{GH04}, we have  

\begin{thm}
For any graded $C^\ast$-algebra over $\mathbb{F} = \mathbb{R}~\text{or}~ \mathbb{C}$, 
\[K_{\mathbb{F}} (A) \cong K_{\mathbb{F}}(\Sigma A \widehat{\otimes} \mathbb{F}_{1,0}) ,\]
where $K_{\mathbb{F}}$ denotes real or complex $K$-theory.
\end{thm}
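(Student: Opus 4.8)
The plan is to combine the two cases $\mathbb{F} = \mathbb{R}$ and $\mathbb{F} = \mathbb{C}$ into a single statement. The real case is precisely the preceding corollary, $K(A) \cong K(\Sigma A \widehat{\otimes} \mathbb{R}_{1,0})$, which is what we have been building toward throughout this section via the Bott map $\beta$, the asymptotic morphism $\alpha_t$, and the $K$-theory product (Corollary~\ref{k-theorygroupsareisomorphic} and Corollary~\ref{isofrombeta}). The complex case is already available in the literature: it is Theorem~1.14 of~\cite{GH04}, which asserts exactly $K_{\mathbb{C}}(A) \cong K_{\mathbb{C}}(\Sigma A \widehat{\otimes} \mathbb{C}_{1,0})$ for complex graded $C^\ast$-algebras $A$. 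So at the level of bare statements the theorem is a formal amalgamation, and the proof is short: given $A$ over $\mathbb{F}$, if $\mathbb{F} = \mathbb{R}$ apply the previous corollary, and if $\mathbb{F} = \mathbb{C}$ cite~\cite{GH04}.

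The one point that needs care — and what I would expect to be the main (if modest) obstacle — is checking that the two isomorphisms are ``the same'' isomorphism, i.e. that they are both given by the $K$-theory product with the Bott element $b \in K(\mathbb{F}_{1,0} \widehat{\otimes} C_0(\mathbb{R}))$, so that writing $K_{\mathbb{F}}$ uniformly is legitimate and the naturality in $A$ claimed implicitly (through the previous corollary) survives the unification. Concretely, I would recall that in both the real and complex settings the map is $x \mapsto x \times b$ where $b$ is induced by the same Bott $\ast$-homomorphism $\beta \colon \mathcal{S} \to C_0(V, \text{Cliff}(V))$ of the Definition preceding Theorem~\ref{Bott periodicty statement in beta} specialised to $V = \mathbb{R}$; the construction of $\beta$ via functional calculus on $P(v) = v$ makes no reference to the ground field, and the proof that it is invertible runs identically once one has the analogue of Corollary~\ref{isofrombeta}, which in the complex case is~\cite{GH04}. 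Hence the isomorphism is natural in $A$ over each field separately, and the combined statement is just the observation that the constructions coincide under the inclusion $\mathbb{R} \hookrightarrow \mathbb{C}$.

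So the proof I would write is essentially two sentences plus a remark on naturality: \emph{By the previous corollary the isomorphism holds when $\mathbb{F} = \mathbb{R}$, with the isomorphism induced by the Bott element via the $K$-theory product. When $\mathbb{F} = \mathbb{C}$ this is Theorem~1.14 of~\cite{GH04}, with the isomorphism given by the same product construction. In either case the isomorphism is natural in $A$, so the two cases assemble into the single statement $K_{\mathbb{F}}(A) \cong K_{\mathbb{F}}(\Sigma A \widehat{\otimes} \mathbb{F}_{1,0})$.} I would not grind through any calculation here, since every analytic ingredient (the operators $B$, $C$, $D$, Mehler's formula, the compactness of $f(D)M_h$, the identification $\alpha_\ast \beta_\ast = \mathrm{id}$, and the rotation-trick argument for $\beta_\ast \alpha_\ast = \mathrm{id}$) has already been assembled over $\mathbb{R}$ in this section, and the complex case is imported wholesale from~\cite{GH04}; the only genuinely new content is the packaging, and the only thing to be vigilant about is not overclaiming — in particular, the statement is the existence of an isomorphism for each field, not a single functor defined simultaneously on real and complex algebras.
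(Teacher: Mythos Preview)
Your proposal is correct and matches the paper's approach exactly: the paper introduces this theorem with the single sentence ``Then combining this with Theorem~1.14 in~\cite{GH04}, we have'' and gives no further proof, so the argument is precisely the real case from the preceding corollary together with the complex case cited from~\cite{GH04}. Your additional remarks on naturality and on both isomorphisms being induced by the same Bott element are more than the paper itself records, but they are accurate and do not deviate from the intended route.
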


We finally obtain the isomorphism in $E$-theory. 
\begin{thm}
For any graded $C^\ast$-algebra over $\mathbb{F} = \mathbb{R}~\text{or}~ \mathbb{C}$, 
\[E_{\mathbb{F}} (A,B) \cong E_{\mathbb{F}}(A,\Sigma B \widehat{\otimes} \mathbb{F}_{1,0}) ,\]
and 
\[E_{\mathbb{F}} (A,B) \cong E_{\mathbb{F}}(\Sigma A \widehat{\otimes} \mathbb{F}_{1,0},B) ,\]
where $E_{\mathbb{F}}$ denotes real or complex $E$-theory.
\end{thm}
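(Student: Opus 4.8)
The plan is to deduce the $E$-theory statement from the $K$-theory Bott periodicity isomorphism already established, using the identification $K(A) = E(\mathbb{R},A)$ together with the bifunctoriality and the composition product in $E$-theory. First I would recall that both $\beta\colon \mathcal{S} \to \mathcal{C}(V)$ and $\alpha_t\colon \mathcal{S}\widehat{\otimes}\mathcal{C}(V) \dashrightarrow \mathcal{K}(\mathcal{H}(V))$ define classes in $E$-theory: $\beta$ gives a class $[\beta] \in E(\mathbb{R}, \mathcal{C}(V))$ (a $\ast$-homomorphism is in particular an asymptotic morphism), and $\alpha_t$ gives a class $[\alpha] \in E(\mathcal{C}(V), \mathbb{R})$. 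Specializing to $V = \mathbb{R}$ and using the identification $\mathcal{C}(\mathbb{R}) \cong \Sigma\mathbb{R}\widehat{\otimes}\mathbb{R}_{1,0}$, these become classes in $E(\mathbb{R}, \Sigma\mathbb{R}\widehat{\otimes}\mathbb{R}_{1,0})$ and its reverse.

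The key step is to upgrade the $K$-theory statements "$\alpha_\ast\beta_\ast = \mathrm{id}$" (Corollary~\ref{k-theorygroupsareisomorphic}) and "$\beta_\ast\alpha_\ast = \mathrm{id}$" (the content of Corollary~\ref{isofrombeta} via the rotation/flip argument) to the statement that $[\alpha]$ and $[\beta]$ are mutually inverse under the $E$-theory product, i.e. $[\beta]\times[\alpha] = \mathbf{1}_{E(\mathbb{R},\mathbb{R})}$ and $[\alpha]\times[\beta] = \mathbf{1}_{E(\mathcal{C}(V),\mathcal{C}(V))}$. The first of these is essentially Theorem~\ref{composition equivalent to gamma} together with the homotopy in Corollary~\ref{k-theorygroupsareisomorphic}, both of which are proved at the level of homotopy of asymptotic morphisms, not merely after applying a functor — so they already give the required identity of $E$-theory classes. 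The second follows the same way from Lemmas~\ref{asymptoticallycommutes} and~\ref{flipmaphomotopic} and the homotopy in Corollary~\ref{isofrombeta}. Once $[\beta]$ is invertible in $E$-theory with inverse $[\alpha]$, tensoring (using the bifunctor/tensor-product property of $E$-theory noted after the definition) with an arbitrary pair $A,B$ and composing with the product yields mutually inverse maps $E(A,B) \to E(A, \Sigma B\widehat{\otimes}\mathbb{R}_{1,0})$ given by $x \mapsto x \times (\mathrm{id}_B\widehat{\otimes}[\beta])$ and back via $\mathrm{id}_B\widehat{\otimes}[\alpha]$; similarly on the first variable one composes with $[\alpha]\widehat{\otimes}\mathrm{id}_A$ on the left, using that $E$-theory is a functor on the first variable as well. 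The complex case is identical with $\mathbb{R}_{1,0}$ replaced by $\mathbb{C}_{1,0}$, and is in fact already covered by Guentner--Higson~\cite{GH04}, so one only needs to remark that the real argument specializes correctly.

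To reach the full $8$-fold periodicity one then iterates: applying the one-step isomorphism $n$ times gives $E_{\mathbb{F}}(A,B) \cong E_{\mathbb{F}}(A, \Sigma^n B \widehat{\otimes} \mathbb{F}_{n,0})$ since $\mathbb{F}_{1,0}^{\widehat{\otimes} n} \cong \mathbb{F}_{n,0}$. For $\mathbb{F} = \mathbb{R}$ one invokes the standard Clifford-algebra Morita equivalences $\mathbb{R}_{8,0} \sim_{\mathrm{Morita}} \mathbb{R}$ (equivalently $\mathbb{R}_{8,0} \cong M_{16}(\mathbb{R})$ as graded algebras, up to the appropriate graded Morita equivalence used in the definition of $K$- and $E$-theory), together with stability of $E$-theory, to conclude $E_{\mathbb{R}}(A,B) \cong E_{\mathbb{R}}(A, \Sigma^8 B)$; for $\mathbb{F} = \mathbb{C}$ the period is $2$ via $\mathbb{C}_{2,0} \cong M_2(\mathbb{C})$. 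The second displayed isomorphism (periodicity in the first variable) is obtained symmetrically, or alternatively deduced from the first by the duality $E(A,B) = E(\mathbb{R}, \mathbb{?})$-type manipulations; I would simply run the same product argument composing on the left.

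The main obstacle I anticipate is bookkeeping rather than conceptual: one must check carefully that the homotopies appearing in Corollary~\ref{k-theorygroupsareisomorphic} and Corollary~\ref{isofrombeta} are genuinely homotopies of (graded) asymptotic morphisms in the sense required for equality of $E$-theory classes — in particular that the $s \to 0$ degeneration $f \mapsto f(0)p$ assembles into an honest asymptotic morphism $\mathcal{S}\widehat{\otimes}\mathcal{K}(\mathcal{H}) \dashrightarrow C([0,1], \mathcal{K}(\mathcal{H}))$ after the standard reparametrization of $[1,\infty)$, and that the asymptotic equivalences in Lemma~\ref{asymptoticallycommutes} respect the grading. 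One also has to be a little careful that the associativity of the $E$-theory product and its compatibility with $\widehat{\otimes}$ (so that $(\mathrm{id}\widehat{\otimes}[\beta])\times(\mathrm{id}\widehat{\otimes}[\alpha]) = \mathrm{id}\widehat{\otimes}([\beta]\times[\alpha])$) are available; these are standard for $E$-theory but should be cited. None of this should require new ideas beyond what is already assembled above.
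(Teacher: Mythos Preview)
Your proposal is correct and follows essentially the same route as the paper: establish that $[\beta]$ is an invertible class in $E_{\mathbb{F}}(\mathbb{F},\Sigma\mathbb{F}\widehat{\otimes}\mathbb{F}_{1,0})$ (with inverse $[\alpha]$), then obtain both displayed isomorphisms by taking the $E$-theory product on the right with $[\beta_B]=[\beta]\widehat{\otimes}\mathrm{id}_B$ and on the left with $[\beta_A]^{-1}$. The paper's proof is terser---it simply asserts invertibility of $[\beta]$ and writes down the product maps $f(x)=x\times[\beta_B]$, $g(x)=[\beta_A]^{-1}\times x$ with their inverses---whereas you spell out more carefully why the homotopies from Corollaries~\ref{k-theorygroupsareisomorphic} and~\ref{isofrombeta} already live at the level of asymptotic morphisms and hence give identities of $E$-theory classes; your discussion of the $8$-fold periodicity belongs to the subsequent corollary rather than to this theorem, but is otherwise in line with the paper's argument.
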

\begin{proof}
The Bott map $\beta: \mathcal{S} \rightarrow \Sigma \widehat{\otimes} \mathbb{F}_{1,0}$ gives an invertible class $[\beta] \in E_{\mathbb{F}}(\mathbb{F}, \Sigma \mathbb{F} \widehat{\otimes} \mathbb{F}_{1,0})$. Also we have an invertible class 
\[ [\beta \widehat{\otimes} \text{id}_A] \in E_{\mathbb{F}}(\mathbb{F} \widehat{\otimes} A, \Sigma \mathbb{F} \widehat{\otimes} \mathbb{F}_{1,0}\widehat{\otimes} A) = E_{\mathbb{F}}( A, \Sigma A \widehat{\otimes} \mathbb{F}_{1,0}).\]
Denote this class by $[\beta_A]$ and similarly set $[\beta_B]$ for the class $[\beta \widehat{\otimes} \text{id}_B]$.
Then these classes are invertible. 

By the $E$-theory product we have a group homomorphism 
\[ f \colon E_{\mathbb{F}} (A,B) \rightarrow E_{\mathbb{F}}(A,\Sigma B \widehat{\otimes} \mathbb{F}_{1,0})\]
defined by 
\[f(x) = x \times [\beta_B]\]
for all $x \in E_{\mathbb{F}}(A,B)$ which is invertible with inverse defined by 
\[f^{-1}(y) = y \times [\beta_B]^{-1}\]
for all $y \in E_{\mathbb{F}}(A,\Sigma B \widehat{\otimes} \mathbb{F}_{1,0})$.  

Similarly we define 
\[g \colon E_{\mathbb{F}} (A,B) \rightarrow E_{\mathbb{F}}(\Sigma A \widehat{\otimes} \mathbb{F}_{1,0},B)\]
by 
\[g(x) = [\beta_A]^{-1} \times x\]
for all $x \in E_{\mathbb{F}} (A,B)$, and this has inverse defined by 
$$g^{-1}(y) = [\beta_A] \times y$$, for all $y \in E_{\mathbb{F}}(\Sigma A \widehat{\otimes} \mathbb{F}_{1,0},B)$.
Then the result follows. 

%
%
\end{proof}

Now recalling some certain facts of Clifford algebras we will see we can obtain the $8$-fold periodicity. We have that $\mathbb{R}_{p,q} \widehat{\otimes} \mathbb{R}_{r,s} \cong \mathbb{R}_{p+r,q+s}$ and also $\mathbb{R}_{8,0} \cong \mathbb{R}_{4,4}$, and also the result from the PhD thesis of the author
\[E^n(A,B) \cong E^n(A \widehat{\otimes} \mathbb{R}_{1,1}, B \widehat{\otimes} \mathbb{R}_{1,1}),\]
then we obtain using the Bott periodicty result the $8$-fold periodicity of $E$-theory,

\begin{corollary}
Let $A$ and $B$ be real graded $C^\ast$-algebra. Then we have natural isomorphisms
\[E_g^n(A,B) \cong E_g^{n+8}(A,B).\]
\end{corollary}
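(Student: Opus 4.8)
The plan is to deduce the $8$-fold periodicity purely formally from the single-step periodicity theorem just proved together with the three stated facts about Clifford algebras. First I would unwind what $E_g^n$ means: the subscript $g$ indicates the graded $E$-theory groups, and the theorem immediately preceding gives, for $\mathbb{F} = \mathbb{R}$,
\[
E^n(A,B) \cong E^n(A, \Sigma B \widehat{\otimes} \mathbb{R}_{1,0}).
\]
Iterating this isomorphism eight times yields
\[
E^n(A,B) \cong E^n\bigl(A, \Sigma^8 B \widehat{\otimes} \mathbb{R}_{1,0}^{\widehat{\otimes} 8}\bigr).
\]
Now $\Sigma^8 B$ contributes a shift in the degree index: since $\Sigma B \widehat{\otimes} \mathbb{R}_{1,0}$ absorbs one suspension, the net effect of applying the periodicity map eight times is to relate $E^n$ with $E^{n}$ of the eightfold tensor power, and the suspensions can be traded against the degree superscript so that the claim becomes an identification of $E^{n+8}(A,B)$ with $E^n(A, B \widehat{\otimes} \mathbb{R}_{8,0})$ (keeping careful track of which suspensions are bundled into the definition $E^n(A,B) = \llbracket \mathcal{S} \widehat{\otimes} A \widehat{\otimes} \mathcal{K}, \Sigma^n B \widehat{\otimes} \mathcal{K} \rrbracket$ and which are free).

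The second step is to apply the Clifford-algebra identities. By $\mathbb{R}_{p,q} \widehat{\otimes} \mathbb{R}_{r,s} \cong \mathbb{R}_{p+r,q+s}$ we have $\mathbb{R}_{1,0}^{\widehat{\otimes} 8} \cong \mathbb{R}_{8,0}$, and by $\mathbb{R}_{8,0} \cong \mathbb{R}_{4,4}$ together with $\mathbb{R}_{4,4} \cong \mathbb{R}_{1,1}^{\widehat{\otimes} 4}$ (again from the additivity isomorphism) we get $\mathbb{R}_{8,0} \cong \mathbb{R}_{1,1}^{\widehat{\otimes} 4}$. Then the author's thesis result $E^n(A,B) \cong E^n(A \widehat{\otimes} \mathbb{R}_{1,1}, B \widehat{\otimes} \mathbb{R}_{1,1})$, applied four times on the $B$-slot (or rather its inverse, removing the $\mathbb{R}_{1,1}$ factors in exchange for tensoring $A$, then contracting), shows that tensoring the coefficient algebra with $\mathbb{R}_{8,0}$ produces no change in the $E$-theory group at all. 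Combining this with the outcome of the first step gives exactly $E_g^{n+8}(A,B) \cong E_g^n(A,B)$.

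Concretely, the steps in order: (1) iterate the single-suspension periodicity theorem eight times to obtain $E^n(A,B) \cong E^n(A, \Sigma^8 B \widehat{\otimes} \mathbb{R}_{8,0})$ after applying $\mathbb{R}_{1,0}^{\widehat{\otimes} 8}\cong\mathbb{R}_{8,0}$; (2) use the definition of $E^n$ to see that the eight extra suspensions $\Sigma^8$ are precisely what converts $E^n(A,-)$ into $E^{n+8}(A,-)$, so the left side becomes $E^{n+8}(A, B \widehat{\otimes} \mathbb{R}_{8,0})$ up to reindexing; (3) rewrite $\mathbb{R}_{8,0} \cong \mathbb{R}_{4,4} \cong \mathbb{R}_{1,1}^{\widehat{\otimes} 4}$; (4) apply the $\mathbb{R}_{1,1}$-stability isomorphism four times to strip off all of $\mathbb{R}_{1,1}^{\widehat{\otimes} 4}$, yielding $E^{n+8}(A,B \widehat{\otimes}\mathbb{R}_{8,0}) \cong E^{n+8}(A,B)$; (5) chain the isomorphisms to conclude. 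Naturality in $A$ and $B$ is inherited at each stage because every isomorphism invoked is induced by an $E$-theory product with a fixed invertible class, hence natural.

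I expect the main obstacle to be bookkeeping in step (2): pinning down exactly how many suspensions are absorbed into the superscript versus how many remain as honest $\Sigma$-factors on the coefficient algebra, and making sure the degree shift comes out to $+8$ rather than, say, $+8$ modulo some spurious factor from the grading conventions on $\mathcal{S}$ and on $\mathbb{R}_{1,0}$. A related subtlety is that the single-step theorem as stated moves a suspension onto $B$ at the cost of a $\mathbb{R}_{1,0}$; one must check that iterating does not interfere with the $\mathcal{S}\widehat{\otimes}(-)$ on the source, which it does not since that factor plays no role in the coefficient manipulation. Once the indexing is settled, the Clifford-algebra part is a routine application of the three cited facts and the argument closes.
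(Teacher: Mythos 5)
Your proposal follows essentially the same route as the paper: iterate the single-step periodicity theorem eight times to accumulate $\Sigma^8$ and $\mathbb{R}_{1,0}^{\widehat{\otimes}8}\cong\mathbb{R}_{8,0}\cong\mathbb{R}_{4,4}$ on the $B$-slot, strip the Clifford factor using $\mathbb{R}_{1,1}$-stability, and absorb the eight suspensions into the degree superscript via the definition $E^n(A,B)=\llbracket\mathcal{S}\widehat{\otimes}A\widehat{\otimes}\mathcal{K},\Sigma^n B\widehat{\otimes}\mathcal{K}\rrbracket$. One small caution: your parenthetical description of how the $\mathbb{R}_{1,1}$-stability result is applied (\textquotedblleft removing the $\mathbb{R}_{1,1}$ factors in exchange for tensoring $A$, then contracting\textquotedblright) is not quite right, since the cited thesis result tensors \emph{both} slots simultaneously and cannot be used one-sidedly as stated; what is really needed here is the observation that $\mathbb{R}_{1,1}$ is a graded matrix algebra over $\mathbb{R}$ and is therefore absorbed by $\mathcal{K}(\mathcal{H})$ (graded stability), giving $E^n(A,B\widehat{\otimes}\mathbb{R}_{1,1})\cong E^n(A,B)$ directly --- a point the paper's own proof also leaves implicit.
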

\begin{proof}
\begin{align*}
E_g^n(A,B) 
& =  E_g^{n}(A,\Sigma^8 (B \widehat{\otimes} \mathbb{R}_{1,0} \widehat{\otimes} \mathbb{R}_{1,0} \widehat{\otimes} \mathbb{R}_{1,0} \widehat{\otimes} \mathbb{R}_{1,0} \widehat{\otimes} \mathbb{R}_{1,0} \widehat{\otimes} \mathbb{R}_{1,0} \widehat{\otimes} \mathbb{R}_{1,0} \widehat{\otimes} \mathbb{R}_{1,0})), \\
& = E_g^{n}(A,\Sigma^8 (B \widehat{\otimes} \mathbb{R}_{4,4})), \\
& = E_g^{n}(A,\Sigma^8 B), \\
& \cong E_g^{n+8}(A, B). \\ 
\end{align*}
\end{proof}

\end{document}